\renewenvironment{proof}{{ \textbf{Proof}.}}{\qed}
\newtheorem{thm}{Theorem}[section]
\newtheorem{lem}[thm]{Lemma}
\newtheorem{Def}[thm]{Definition}
\newtheorem{cor}[thm]{Corollary}
\newtheorem{prop}[thm]{Proposition}
\newtheorem{Ex1}[thm]{Example}
\newtheorem{Rem1}[thm]{Remark}
   \newcommand{\dR}{\mathrm{dR}}
\title{Hodge-type 
decomposition for de Rham cohomology of $ p 
	$-adically 
	uniformized varieties}
\author{Yufan Luo}
\date{\today}
\address{Yufan Luo
\newline School of Mathematical Sciences,
\newline East China Normal University
\newline 200241, Shanghai,
\newline P. R. China}
\email{yufanluo@hotmail.com}
\begin{document}	
\maketitle

 \begin{abstract}
 	We prove a Hodge-type decomposition for the de-Rham 
 	cohomology 
 	of $ p 
 	$-adically 
 	uniformized varieties by the product of Drinfeld's symmetric spaces. It is 
 	based on work of Schneider, Stuhler, Iovita and Spiess on the cohomology of 
 	Drinfeld's symmetric space.
 \end{abstract}

 \section{Introduction}
 Let $ K $ be a $ p $-adic field, i.e. a finite extension of $ 
 \mathbb{Q}_{p} $ 
 and let $ d $ be a 
 natural number. The $ d $-dimension \textit{Drinfeld's symmetric space} 
 over $ 
 K $, 
 denoted by $ \mathfrak{X}^{d}_{K} $, is the complement in $ 
 \mathbb{P}_{K}^{d} 
 $ of 
 all $ K $-rational hyperplanes. It is a rigid analytic Stein space. We have 
 a 
 natural action of $ G:=PGL_{d+1}(K) $ on $ \mathfrak{X}_{K}^{d} $. 
 
 Let $ \Gamma $ be a discrete torsion-free, cocompact subgroup of $ G $. We 
 have 
 a natural action of $ \Gamma $ on $ \mathfrak{X}_{K}^{d} $. By a theorem of 
 Mustafin \cite{3}, 
 the 
 quotient $ 
 X_{\Gamma}:=\Gamma \backslash 
 \mathfrak{X}_{K}^{d} $ is the rigid analytic space associated to a unique 
 smooth 
 projective variety defined over $ K $. For every $ 0\leq n\leq d $, the de 
 Rham 
 cohomology $ H_{\dR}^{n}(X_{\Gamma}) $ of $ X_{\Gamma} $ 
 admits the usual Hodge filtration $ 
 F_{\dR}^{\bullet}H_{\dR}^{n}(X_{\Gamma}) $ 
 and covering filtration $ F_{\Gamma}^{\bullet}H_{\dR}^{n}(X_{\Gamma}) $ 
 induced 
 by the covering speactral sequence. It is known that the filtrations $  
 F_{\dR}^{\bullet}H_{\dR}^{n}(X_{\Gamma})  $ and $  
 F_{\Gamma}^{\bullet}H_{\dR}^{n}(X_{\Gamma}) $ are opposite to each other, 
 i.e. 
 \[  H_{\dR}^{n}(X_{\Gamma})=F_{\Gamma}^{p}\oplus F_{\dR}^{n+1-p} \]
 for any $ 
 0\leq p\leq n+1 $. As 
 consequence one gets a Hodge-type decomposition 
 \[  H^{n}_{\dR}(X_{\Gamma})=\bigoplus_{p+q=n}F_{\dR}^{p}H_{\dR}^{n}\cap 
 F_{\Gamma}^{q}H^{n}_{\dR}. \]
 This Hodge-type decomposition has been conjectured by Schneider \cite{2} 
 and 
 proved by 
 Iovita and Spiess \cite{6}. Other proofs were given later in \cite{5} and 
 \cite{7}. 
 
 In this 
 paper, we will generalize the Hodge-type decomposition: we will consider a 
 variety $ X $ which is uniformized by the product of the Drinfeld's symmetric 
 spaces. Interesting examples of such varieties are the unitary 
 Shimura 
 varietes studied by Rapoport and Zink (See \cite{RZ}, Chapter 6).
 
 Our setting is as follows. Let $ K,K' $ be $ p $-adic fields and $ K'' $ a 
 $ p 
 $-adic field containing $ K $ and $ K' $. Let $ 
 \mathcal{O}_{K},\mathcal{O}_{K'},\mathcal{O}_{K''} $ denote their valuation 
 ring respectively. Fix integers 
 $ d,d'\geq 1 $ and put $ G=PGL_{d+1}(K),~G'=PGL_{d'+1}(K') $ respectively. 
 Let $ 
 \mathfrak{X}_{K}= \mathfrak{X}^{d}_{K} $ (resp. $ 
 \mathfrak{X}_{K'}=\mathfrak{X}^{d'}_{K'} $) 
 denote 
 the $ d $-dimensional (resp. $ d' $-dimensional) Drinfeld's symmetric 
 space. 
 To simplify the notation, we write $ 
 \mathfrak{X}_{K}\times_{K''}\mathfrak{X}_{K'}:=(\mathfrak{X}_{K}\otimes_{K}K'')\times_{K''}
 (\mathfrak{X}_{K'}\otimes_{K'}K'') $. 
 
 The main theorem of this paper is the following:
 \begin{thm}\label{main}\label{thm1.1}
 	Let $ \Gamma$ be a discrete torsion-free cocompact 
 	subgroup of $ G\times G' $, the quotient $X_{\Gamma}:=\Gamma \backslash 
 	(	\mathfrak{X}_{K}\times_{K''}\mathfrak{X}_{K'})$ becomes a smooth 
 	rigid 
 	variety 
 	over $ K'' $. For $ 0\leq n\leq d+d' $, there exists a covering 
 	spectral sequence for $ H_{\dR}^{n}(X_{\Gamma}) $ and it degenerates at 
 	$ 
 	E_{2} $. Moreover, the assosicated 
 	covering filtration $ F_{\Gamma}^{\bullet} $ is 
 	opposite to the usual Hodge filtration $ F_{\dR}^{\bullet} $ 
 	on $ 
 	H^{n}_{\dR}(X_{\Gamma}) $, i.e. $ 
 	H_{\dR}^{n}(X_{\Gamma})=F_{\Gamma}^{i}\oplus F_{\dR}^{n+1-i} $ for any 
 	$ 
 	0\leq i\leq n+1 $. Hence we have a Hodge-type 
 	decomposition for $ H_{\dR}^{n}(X_{\Gamma}) $
 	\[  H^{n}_{\dR}(X_{\Gamma})=\bigoplus_{p+q=n}F_{\dR}^{p}\cap 
 	F_{\Gamma}^{q}. \]
 \end{thm}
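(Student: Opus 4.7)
My plan is to reduce the product case to the single-factor theorem of Iovita--Spiess by a K\"unneth argument. Write $\widetilde{X}:=\mathfrak{X}_K\times_{K''}\mathfrak{X}_{K'}$, so that $X_\Gamma=\Gamma\backslash\widetilde{X}$. First I would build a $\Gamma$-equivariant admissible Stein covering of $\widetilde{X}$ as the product, indexed by pairs of simplices of the two Bruhat--Tits buildings of $G$ and $G'$, of the Schneider--Stuhler coverings of $\mathfrak{X}_K$ and $\mathfrak{X}_{K'}$. The associated \v{C}ech-to-derived functor spectral sequence, after descent to $X_\Gamma$, yields a covering spectral sequence
\[ E_2^{p,q}=H^p\bigl(\Gamma,H^q_{\dR}(\widetilde{X})\bigr)\Longrightarrow H^{p+q}_{\dR}(X_\Gamma), \]
whose abutment filtration is $F_{\Gamma}^{\bullet}$ by definition.

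Next, since both Drinfeld spaces are Stein with Schneider--Stuhler-type de Rham cohomology, a rigid analytic K\"unneth formula yields
\[ H^q_{\dR}(\widetilde{X})\;\simeq\;\bigoplus_{i+j=q}H^i_{\dR}(\mathfrak{X}_K)\otimes_{K''}H^j_{\dR}(\mathfrak{X}_{K'}) \]
as $(G\times G')$-modules, compatibly with the Hodge filtrations on each factor. Plugging this into the $E_2$-page and combining the known $E_2$-degeneration for each separate factor with the explicit Steinberg-type description of $H^\bullet_{\dR}(\mathfrak{X}_K)$ as a smooth $G$-representation, I would deduce the $E_2$-degeneration for the product.

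For the opposite-filtration property, the K\"unneth isomorphism should identify both $F_{\dR}^{\bullet}H^n_{\dR}(X_\Gamma)$ and $F_{\Gamma}^{\bullet}H^n_{\dR}(X_\Gamma)$ with tensor-product filtrations induced from the corresponding filtrations on each factor. Since for each factor the Hodge and covering filtrations are opposite by Iovita--Spiess, a purely algebraic tensor-product argument then produces the opposite-ness on the product, and the resulting bigraded decomposition is exactly the desired Hodge-type decomposition. The main obstacle is the compatibility of all of this with a \emph{non-product} subgroup $\Gamma\subset G\times G'$: the K\"unneth decomposition is only $(G\times G')$-equivariant, and group cohomology does not respect tensor products in general, so the reduction to the single-factor case requires careful use of the explicit representation-theoretic structure of $H^\bullet_{\dR}(\mathfrak{X}_K)$, in particular its admissibility and its realization as (generalized) Steinberg representations, to control the $\Gamma$-cohomology of the tensor-product summands in bidegree $(i,j)$.
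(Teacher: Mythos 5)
Your overall skeleton (covering spectral sequence for $\Gamma\backslash\widetilde X$, K\"unneth for $H^\bullet_{\dR}(\widetilde X)$, computation of the $E_2$-terms as $\Gamma$-cohomology of tensor products of duals of generalized Steinberg representations) matches the paper through its Section 3. The gap is in the last two steps. Since $\Gamma$ is an arbitrary cocompact lattice in $G\times G'$ --- not a product $\Gamma_1\times\Gamma_2$ --- the quotient $X_\Gamma$ is not a product of two single-factor quotients, so there is no K\"unneth decomposition of $H^n_{\dR}(X_\Gamma)$ itself, the covering filtration $F_\Gamma^\bullet$ is not a tensor-product filtration, and the single-factor Iovita--Spiess theorem is not applicable: it concerns $\Gamma_1\backslash\mathfrak X_K$ for a lattice $\Gamma_1\subset G$, and no such lattice is available here. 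Likewise $H^p(\Gamma, H^i\otimes H^j)$ does not factor as a tensor product of group cohomologies, so ``combining the known $E_2$-degeneration for each separate factor'' has no meaning in this setting. You correctly flag this as the main obstacle, but admissibility and the Steinberg structure alone do not close it.

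What the paper actually does is rerun the Iovita--Spiess argument directly on the product. The key object is the sub-double-complex built from $(\Omega^{p_1}_{log,b}(\mathfrak X_K)\otimes_K K'')\otimes_{K''}(\Omega^{p_2}_{log,b}(\mathfrak X_{K'})\otimes_{K'}K'')$, the tensor product of the spaces of bounded logarithmic forms, on which the first differential vanishes; its Proposition 4.1 shows that the induced map on $''E_2$-terms is an isomorphism, using the integral model $\mathrm{Hom}(V_{p_1}(\mathcal O_{K''})\otimes V'_{p_2}(\mathcal O_{K''}),\mathcal O_{K''})$ together with the fact that $\Gamma$ acts freely on $\mathcal{BT}\times\mathcal{BT}'$, which supplies finite free $\mathcal O_{K''}[\Gamma]$-resolutions of these tensor products. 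The general double-complex criterion (Lemma 4.1) then yields degeneration at $E_2$ and the identity $H^n_{\dR}(X_\Gamma)=F_\Gamma^i+F_{\dR}^{n+1-i}$; this sum is upgraded to a direct sum by the dimension identities coming from the explicit computation of the $E_2$-page (Corollary 3.13) and from Serre duality for the Hodge filtration. Both the logarithmic-forms sub-complex and the final dimension count are absent from your proposal, and without them the degeneration and the oppositeness of the two filtrations remain unproved.
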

 
 Our strategy is the same as that in \cite{1},\cite{2} and \cite{6}. First 
 we 
 compute the 
 de Rham cohomology of $ 
 \mathfrak{X}_{K}\times_{K''}\mathfrak{X}_{K'} $ by Kunneth formula and the 
 results of Schneider and Stuhler \cite{1}. Second we prove 
 the existence of covering spectral sequence for de Rham cohomology of the
 quotient 
 $ \Gamma \backslash X $, where $ X $ is any 
 separated smooth rigid space over $ K $ and $ \Gamma $ is a 
 group of automorphisms acting discontinuously, freely on $ X $. In 
 particular, the 
 existence 
 of 
 covering spectral sequence 
 for de Rham cohomology of quotient space $ \Gamma \backslash 
 (\mathfrak{X}_{K}\times_{K''}\mathfrak{X}_{K'})  $ is established, where $ 
 \Gamma $ is a 
 discrete torsion-free subgroup of $ G\times G' $. The $ E_{2} $-terms of 
 this 
 covering spectral sequence 
 can 
 be 
 transformed into Ext-groups in the category of smooth $ G\times G' 
 $-representation and we compute these Ext-groups as same as in \cite{1}. 
 Then 
 we can also
 compute the cohomology of quotient space $ X_{\Gamma} $.
 Using  
 this result, finally we can prove a Hodge-type decomposition for the 
 de Rham 
 cohomology 
 of $ \Gamma \backslash 
 (\mathfrak{X}_{K}\times_{K''}\mathfrak{X}_{K'})  $ as same as in \cite{6}.

 Although Theorem \ref{main} is stated for the product of two Drinfeld's 
 symmetric 
 spaces, our argument also works for the product of more than two Drinfeld's 
 symmetric spaces. 
 
 \section{The de Rham cohomology of $ 
 	\mathfrak{X}_{K}\times_{K''}\mathfrak{X}_{K'} $ }
 
 In this section we want to compute the de Rham cohomology of $ 
 \mathfrak{X}_{K}\times_{K''}\mathfrak{X}_{K'} $. 
 
 First we recall a result of Schneider and Stuhler, which is crucial for 
 our work. In \cite{1}, they studied the cohomology of Drinfeld's 
 symmetric space for any cohomology theory satisfying certain natural axioms. 
 Examples are de Rham cohomology and $ \ell $-adic ($ \ell \neq p $) cohomology.
 We need some more notations. For any subset $ 
 I\subset \Delta:=\{1,2,\cdots,d\} $ let $ P_{I}\subset G=PGL_{d+1}(K) $ be 
 that 
 parabolic subgroup which stabilizes the flag
 \[ \left( \sum_{i=1}^{i_{0}}Ke_{i}\subsetneq 
 \sum_{i=1}^{i_{1}}Ke_{i}\subsetneq 
 \cdots \subseteq \sum_{i=1}^{i_{r}}Ke_{i}\right)  \]
 in $ K^{d+1} $ where $ \{i_{0}<i_{1}<\cdots <i_{r}\}=\Delta \backslash I $ and 
 $ e_{1},\cdots,e_{d+1} $ is the standard basis. Furthermore if $ A $ is any 
 abelian group let $ C^{\infty}(G/P_{I},A) $ be the group of all $ A $-valued 
 locally constant functions on the space $ 
 G/P_{I} $. We put
 \[ V_{I}(A):=C^{\infty}(G/P_{I},A)/\sum_{i\in \Delta \backslash 
 	I}C^{\infty}(G/P_{I\cup \{i\}},A)=V_{I}(\mathbb{Z})\otimes_{\mathbb{Z}} A \]
 which is an irreducible smooth representation of $ G $ (see \cite{1}).  For $ 
 0\leq s\leq d $ 
 we use the 
 simpler natation
 \[ V_{s}(A):=V_{\{1,2,\cdots,d-s\}}(A) .\]
For $ s>d $, we set $ V_{s}(A)=0 $. Similarly, for any subset $ J\subset 
\Delta':=\{1,2,\cdots,d'\} $ we consider 
 an irreducible smooth representation $ V'_{J}(A) $ of $ G' $.
 
 \begin{lem}(\cite{1}, Theorem 1 and Lemma 1 in Section 4)\label{lem2.1}
 	For $ 0\leq s\leq d $ there is a natural $ G $-equivariant isomorphism
 	\[ H_{\dR}^{s}(\mathfrak{X}_{K})=\text{Hom}_{K}(V_{s}(K),K) \]
 	and for $ s>d $ we have $ H_{\dR}^{s}(\mathfrak{X}_{K})=0 $. Similarly, for 
 	$ 
 	0\leq s'\leq d' $ there is a natural $ G' $-equivariant isomorphism
 	\[ H_{\dR}^{s'}(\mathfrak{X}_{K'})=\text{Hom}_{K'}(V'_{s'}(K'),K') \]
 	and for $ s'>d' $ we have $ H_{\dR}^{s'}(\mathfrak{X}_{K'})=0 $.
 \end{lem}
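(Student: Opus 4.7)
The statement is essentially Theorem 1 together with Lemma 1 of Section 4 of Schneider--Stuhler \cite{1}, specialized to the de Rham cohomology theory. The plan is to verify that de Rham cohomology fits their axiomatic framework and then import their calculation wholesale.

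First, I would recall the axiomatic setup of \cite{1}: they work with a cohomology theory on rigid spaces over $K$ that is homotopy invariant, admits Mayer--Vietoris, and is acyclic on certain distinguished rational subdomains. For algebraic de Rham cohomology on smooth rigid analytic spaces, these axioms are checked using Kiehl's acyclicity theorem for coherent sheaves on affinoids together with the Poincar\'e lemma for the de Rham complex on polydiscs. Establishing these axioms for de Rham cohomology is essentially the content of Section 4 of \cite{1}, so the plan is to quote this verification.

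Second, I would use the $G$-equivariant admissible affinoid cover of $\mathfrak{X}_K$ indexed by the pointed simplices of the Bruhat--Tits building $\mathcal{BT}$ of $PGL_{d+1}(K)$. The crucial geometric input is the acyclicity of the de Rham complex on each piece of this cover and on all finite intersections. Once this is established, the \v{C}ech-to-cohomology spectral sequence reduces the computation of $H^{s}_{\dR}(\mathfrak{X}_{K})$ to a simplicial cohomology computation on $\mathcal{BT}$ with coefficients twisted by the $G$-action on the cover. Since $\mathcal{BT}$ is $d$-dimensional, the vanishing $H^{s}_{\dR}(\mathfrak{X}_{K})=0$ for $s>d$ is automatic from this presentation.

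Third, for $0\leq s\leq d$, I would use the bijection between $G$-orbits of pointed $q$-simplices of $\mathcal{BT}$ and subsets $I\subset\Delta$ of appropriate size together with the identification of stabilizers with the parabolic subgroups $P_{I}$. This rewrites the simplicial cochain complex as a complex whose $q$-th term is a direct sum of spaces of the form $C^{\infty}(G/P_{I},K)$, with differentials given by the alternating sum of inclusions $P_{I\cup\{i\}}\subset P_{I}$. Dualizing, one recognizes this complex as the standard resolution whose cohomology in degree $s$ is $V_{s}(K)$; hence the de Rham cohomology is naturally identified with $\mathrm{Hom}_{K}(V_{s}(K),K)$, and the argument for $\mathfrak{X}_{K'}$ is identical.

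The main obstacle, already handled in \cite{1}, is the axiomatic acyclicity of de Rham cohomology on the affinoid pieces of the Bruhat--Tits cover, which rests on a delicate explicit description of these subdomains inside $\mathbb{P}^{d}_{K}$. Since this lemma is merely quoted here as a black box for later use, I would be content to reference \cite{1} for this technical core and focus the exposition on how the combinatorics of the building produces the Steinberg-type representation $V_{s}(K)$.
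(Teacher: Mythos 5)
Your proposal is correct and consistent with the paper, which states this lemma purely as a citation of Schneider--Stuhler \cite{1} (Theorem 1 and Lemma 1 in Section 4) and offers no independent proof; your outline faithfully summarizes their strategy of verifying the axioms for de Rham cohomology and reducing the computation to the combinatorics of the Bruhat--Tits building, yielding the dual of $V_{s}(K)$ and the vanishing above degree $d$. Since the paper treats this as a black box, deferring the technical core to \cite{1} exactly as you do is the intended reading.
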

 
 \begin{prop}\label{prop2.2}
 	The Kunneth formula holds for the de Rham cohomology of the product of 
 	Drinfeld's 
 	symmetric spaces $ \mathfrak{X}_{K}\times_{K''}\mathfrak{X}_{K'} $. That is,
 	for $ 0\leq n\leq d+d'  $ we have a natural $ G\times G' $-isomorphism
 	\[ H^{n}_{\dR}(\mathfrak{X}_{K}\times_{K''}\mathfrak{X}_{K'})\simeq 
 	\bigoplus_{i+j=n}H^{i}_{\dR}(\mathfrak{X}_{K}\otimes_{K} 
 	K'')\otimes_{K''}H^{j}_{\dR}(\mathfrak{X}_{K'}\otimes_{K'} K'') \]
 	and for $ n>d+d' $ we have $ 
 	H^{n}_{\dR}(\mathfrak{X}_{K}\times_{K''}\mathfrak{X}_{K'})=0 $.
 \end{prop}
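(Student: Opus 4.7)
The plan is to reduce the Künneth formula to a topological statement about Fréchet complexes, exploiting the Stein property of each factor. First I would observe that the base changes $\mathfrak{X}_{K}\otimes_{K}K''$ and $\mathfrak{X}_{K'}\otimes_{K'}K''$ remain Stein rigid analytic spaces over $K''$, and hence so does the product $Y:=\mathfrak{X}_{K}\times_{K''}\mathfrak{X}_{K'}$. In particular every coherent sheaf on $Y$ has vanishing higher cohomology, so the hypercohomology of the algebraic de Rham complex $\Omega^{\bullet}_{Y}$ reduces to the cohomology of the complex of global sections $\Gamma(Y,\Omega^{\bullet}_{Y})$, and the same holds for each factor.

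Next I would use the decomposition of sheaves of differentials on the product,
\[ \Omega^{k}_{Y}=\bigoplus_{i+j=k}\mathrm{pr}_{1}^{*}\Omega^{i}_{\mathfrak{X}_{K}\otimes K''}\otimes_{\mathcal{O}_{Y}}\mathrm{pr}_{2}^{*}\Omega^{j}_{\mathfrak{X}_{K'}\otimes K''}, \]
to present $\Omega^{\bullet}_{Y}$ as the total complex of a double complex whose global sections should be identified with the completed tensor products $\Gamma(\mathfrak{X}_{K}\otimes K'',\Omega^{i})\,\widehat{\otimes}_{K''}\,\Gamma(\mathfrak{X}_{K'}\otimes K'',\Omega^{j})$. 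Here I would invoke the standard fact that global sections of a box-product sheaf over a product of Stein rigid analytic spaces equals the completed tensor product of the individual global sections, once one fixes the natural Fréchet-nuclear structure on each factor coming from an admissible exhaustion by affinoids.

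The third and main step is a topological Künneth theorem for complexes of nuclear Fréchet spaces. Using that each $\Gamma(\mathfrak{X}_{K}\otimes K'',\Omega^{i})$ is Fréchet nuclear, and using Lemma \ref{lem2.1} to control the cohomology of each factor (in particular the closedness of the image of the de Rham differential so that the cohomology is Hausdorff), one concludes that the cohomology of the total complex is the (algebraic) tensor product of the cohomologies of the factors. The vanishing for $n>d+d'$ then follows immediately from the vanishing of each factor above its own dimension, and the $G\times G'$-equivariance of the isomorphism is automatic from the functoriality of the cup product map induced by the wedge of differential forms.

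The principal obstacle I anticipate is making the topological Künneth step rigorous: one needs the image of each de Rham differential on global sections to be a topologically closed subspace, and one needs to control the interaction of the completed tensor product with taking cohomology. I would expect to handle this either by invoking the Schneider--Stuhler explicit presentation of $\Gamma(\mathfrak{X}_{K},\Omega^{\bullet})$ in terms of locally constant functions on the Bruhat--Tits building (which makes the cohomology manifestly Hausdorff), or by a direct Čech/Mayer--Vietoris argument over the increasing affinoid exhaustion of each Drinfeld space, passing to the limit with some care about the Mittag-Leffler condition.
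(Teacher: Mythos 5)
Your route is the same as the paper's: reduce to global sections via the Stein property and Kiehl's Theorem B, decompose the de Rham complex of the product as the external tensor product of the factors' de Rham complexes, and apply a K\"unneth theorem for complexes; the vanishing for $n>d+d'$ comes from Lemma \ref{lem2.1} in both cases. The divergence is in the middle step. The paper asserts that the global sections of $p_{1}^{\ast}\Omega^{\bullet}\otimes_{K''}p_{2}^{\ast}\Omega^{\bullet}$ over the product are the \emph{algebraic} tensor product $\Omega^{\bullet}(\mathfrak{X}_{K}\otimes_{K}K'')\otimes_{K''}\Omega^{\bullet}(\mathfrak{X}_{K'}\otimes_{K'}K'')$ and then quotes the purely algebraic K\"unneth formula for complexes (Weibel, Theorem 3.6.3), whereas you identify the global sections with the completed tensor product $\widehat{\otimes}_{K''}$ and appeal to a topological K\"unneth theorem for nuclear Fr\'echet complexes. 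Your version is the more careful one: for Stein spaces, which are exhausted by affinoids rather than quasi-compact, global sections of an external tensor product are in general only the completed tensor product, so the paper's literal identification needs justification; and once $\widehat{\otimes}$ enters, the algebraic K\"unneth formula no longer applies and one genuinely needs the topological variant, whose hypotheses (nuclearity, closed image of $d$ so that the cohomology is Hausdorff) you correctly isolate --- the closedness being available here from the Schneider--Stuhler identification of $H^{s}_{\dR}(\mathfrak{X}_{K})$ with a dual of $V_{s}$. The price is that you still owe the statement and proof of that topological K\"unneth theorem, which you flag as the remaining obstacle; the payoff is that your argument closes a gap that the paper's proof passes over silently.
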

 \begin{proof}
 	Recall that $\mathfrak{X}_{K} $ (resp. $\mathfrak{X}_{K'} $) is Stein-space 
 	(\cite{1}, Proposition 4 in Section 1) 
 	and 
 	Theorem B (\cite{14})
 	says 
 	that Stein spaces have trivial 
 	coherent sheaf cohomology, so its de Rham 
 	cohomology is the cohomology of the complex of global rigid analytic froms 
 	$ \Omega^{\bullet}(\mathfrak{X}_{K}) $ (resp. $ 
 	\Omega^{\bullet}(\mathfrak{X}_{K'}) $). That is, 
 	\[ 
 	H_{\dR}^{n}(\mathfrak{X}_{K}\otimes_{K}K'')=h^{n}(\Omega^{\bullet}
 	(\mathfrak{X}_{K}\otimes_{K}K''))\]
 	resp. 
 	\[ 
 	H_{\dR}^{n}(\mathfrak{X}_{K'}\otimes_{K'}K'')=h^{n}(\Omega^{\bullet}
 	(\mathfrak{X}_{K'}\otimes_{K'}K''))\]
 	Let $ 
 	p_{1}:\mathfrak{X}_{K}\times_{K''}\mathfrak{X}_{K'}\to\mathfrak{X}_{K}
 	\otimes_{K}
 	K'',p_{2}:\mathfrak{X}_{K}\times_{K''}\mathfrak{X}_{K'}\to 
 	\mathfrak{X}_{K'}\otimes_{K'} K'' $ be the 
 	canonical projective morphisms, then we have a canonical isomorphism 
 	(\cite{15}, Proposition 1.4)
 	\[ 
 	p_{1}^{\ast}\Omega^{\bullet}_{(\mathfrak{X}_{K}\otimes_{K}K'')/K''}\otimes_{K''}
 	p_{2}^{\ast}\Omega^{\bullet}_{(\mathfrak{X}_{K'}\otimes_{K'}K'')/K''}\simeq
 	\Omega^{\bullet}_{(\mathfrak{X}_{K}\times_{K''}\mathfrak{X}_{K'})/K''}
 	\]
 	Note that we have an isomorphism of $ K''[G\times G'] $-modules
 	\begin{align*}
 	&\Gamma\left(\mathfrak{X}_{K}\times_{K''}\mathfrak{X}_{K'}, 
 	p_{1}^{\ast}\Omega^{\bullet}_{(\mathfrak{X}_{K}\otimes_{K}K'')/K''}\otimes_{K''}
 	p_{2}^{\ast}\Omega^{\bullet}_{(\mathfrak{X}_{K'}\otimes_{K'}K'')/K''}\right)
 	\\
 	\simeq&  \Gamma\left(\mathfrak{X}_{K}\otimes_{K}K'', 
 	\Omega^{\bullet}_{(\mathfrak{X}_{K}\otimes_{K}K'')/K''}\right) \otimes_{K''}
 	\Gamma \left(\mathfrak{X}_{K'}\otimes_{K'}K'',\Omega^{\bullet}
 	_{(\mathfrak{X}_{K'}\otimes_{K'}K'')/K''}\right)
 	\end{align*}

 	then by the Kunneth formula for complexes (\cite{12}, Theorem 3.6.3) we get 
 	the first assertion. The second assertion follows from the Lemma 
 	\ref{lem2.1}.
 \end{proof}
 
 \begin{cor}\label{cor2.3}
 	For $ 0\leq n\leq d+d'  $, there is a $ G\times G' $-isomorphism
 	\[  H^{n}_{\dR}(\mathfrak{X}_{K}\times_{K''}\mathfrak{X}_{K'})\simeq 
 	\bigoplus_{i+j=n}\text{Hom}_{K''}(V_{i}(K''),K'')\otimes_{K''}\text{Hom}_{K''}
 	(V'_{j}(K''),K'') .\]
 \end{cor}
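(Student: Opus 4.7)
The plan is to chain together Proposition \ref{prop2.2} and Lemma \ref{lem2.1}, with the only substantive step being a base-change identification of each single-factor cohomology in terms of the smooth representations $V_{i}$ and $V'_{j}$.

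First I would apply Proposition \ref{prop2.2} to obtain the $G\times G'$-equivariant Künneth decomposition
\[
H^{n}_{\dR}(\mathfrak{X}_{K}\times_{K''}\mathfrak{X}_{K'})\simeq \bigoplus_{i+j=n}H^{i}_{\dR}(\mathfrak{X}_{K}\otimes_{K}K'')\otimes_{K''}H^{j}_{\dR}(\mathfrak{X}_{K'}\otimes_{K'}K''),
\]
thereby reducing the problem to identifying each factor such as $H^{i}_{\dR}(\mathfrak{X}_{K}\otimes_{K}K'')$ with $\text{Hom}_{K''}(V_{i}(K''),K'')$; the treatment of the $\mathfrak{X}_{K'}$-factor is strictly parallel.

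Next, because $\mathfrak{X}_{K}$ is Stein its de Rham cohomology is computed by the complex of global analytic differentials $\Omega^{\bullet}(\mathfrak{X}_{K})$, and this complex is compatible with the flat base change along $K\hookrightarrow K''$. Consequently $H^{i}_{\dR}(\mathfrak{X}_{K}\otimes_{K}K'')\simeq H^{i}_{\dR}(\mathfrak{X}_{K})\otimes_{K}K''$ as $G$-modules. Combining with Lemma \ref{lem2.1} rewrites this as $\text{Hom}_{K}(V_{i}(K),K)\otimes_{K}K''$, and invoking the coefficient identity $V_{i}(A)=V_{i}(\mathbb{Z})\otimes_{\mathbb{Z}} A$ then gives $V_{i}(K)\otimes_{K}K''=V_{i}(K'')$, so that the factor becomes $\text{Hom}_{K''}(V_{i}(K''),K'')$ as required. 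Inserting both identifications into the Künneth decomposition and summing over $i+j=n$ yields the claim.

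The only mildly delicate point is the last Hom identification: for an infinite-dimensional $K$-vector space $V$ one does not literally have $\text{Hom}_{K}(V,K)\otimes_{K}K''\simeq \text{Hom}_{K''}(V\otimes_{K}K'',K'')$. However, inside the framework of \cite{1} one works with the smooth (equivalently, the topological dual arising from the Fréchet structure on the cohomology of a Stein space) duals of $V_{i}(K)$, and for these the base-change identity does hold. I do not expect this to be a real obstacle, since the entire argument is essentially a routine unpacking of the two results already in hand, and it leaves the $G\times G'$-equivariance intact factor by factor.
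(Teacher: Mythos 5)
Your proposal is correct and follows exactly the paper's route: the paper's own proof is the one-line observation that the corollary follows from Proposition \ref{prop2.2} combined with Lemma \ref{lem2.1}. You are in fact more careful than the paper, which silently elides both the base-change identification $H^{i}_{\dR}(\mathfrak{X}_{K}\otimes_{K}K'')\simeq H^{i}_{\dR}(\mathfrak{X}_{K})\otimes_{K}K''$ and the duality-versus-base-change subtlety for the infinite-dimensional $V_{i}$ that you rightly flag.
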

 \begin{proof}
 	It follows from Proposition \ref{prop2.2} and Lemma 
 	\ref{lem2.1}.	
 \end{proof}

 \section{The quotient varieties $ \Gamma \backslash 
 	(\mathfrak{X}_{K}\times_{K''}\mathfrak{X}_{K'}) $ and Covering Spectral 
 	Sequence}
 
 Let $ X $ be a separated smooth rigid space over $ K $ and $ \Gamma $ be a 
 group of automorphisms acting discontinuously, freely on $ X $. We will prove 
 the existence of covering spectral sequence for de Rham cohomology of quotient 
 $ \Gamma \backslash X $.  Recall that the existence of covering spectral 
 sequence for $ H_{\dR}^{\ast}(\Gamma \backslash \mathfrak{X}_{K}) $ was 
 established in (\cite{1}, Proposition 2 in 
 Section 5), where $ \Gamma $ is a discrete torsion-free cocompact subgroup of 
 $ 
 G $. In loc. cit, they only considered a speccial case, but their 
 argument is valued for our general case.
 
 We recall that the action of a group $ \Gamma $ on a rigid space $ X $ over $ 
 K 
 $ 
 is said to 
 \textit{discontinuous} if $ X $ has an admissible affinoid covering $ 
 \{X_{i}\} 
 $ such that the set $ \{\gamma\in \Gamma~;~\gamma X_{i}\cap X_{i}\neq 
 \emptyset\} $ is finite for each $ i $. Furthermore, the above action is said 
 to be \textit{free} if $ \Gamma_{x}=\{1\} $ for all $ x\in X $.

 \begin{Def}(\cite{4}, Section 6.4)
 	Let a group $ \Gamma $ act on a rigid space $ X $ over $ K $. The 
 	\textit{quotient} $ 
 	p:X\to \Gamma \backslash X $ is defined by the following:
 	\begin{enumerate}
 		\item $ \Gamma \backslash X $ is a rigid space and $ p $ is a $ \Gamma 
 		$-invariant morphism of rigid spaces, i.e. $ p\circ \gamma=p $ for all 
 		$ \gamma \in \Gamma $.
 		\item Let $ U\subset X $ be an admissible, $ \Gamma $-invariant set, 
 		and let $ f:U\to Y $ be a morphism which is $ \Gamma $-invariant, i.e. 
 		$ f\circ \gamma =f $ for all $ \gamma \in \Gamma $. Then $ p(U)\subset 
 		\Gamma \backslash X $ is adimissible and the morphism factors uniquely 
 		over $ p(U) $, i.e.$ \xymatrix{f=g\circ p:U\ar[r]& p(U)\ar[r]^{g}& Y} $.
 	\end{enumerate}
 	We denote the quotient $ \Gamma \backslash X $ by $ X_{\Gamma} $.
 \end{Def}
 
 \begin{lem}(\cite{11}, Proposition 3 in Section 6.3.3)\label{lem3.2}
 	If $ G $ is a finite group of automorphisms of the $ K $-affinoid algebra $ 
 	A 
 	$, then $ A^{G} $ is also a $ K $-affinoid algebra and $ A $ is finite 
 	over $ A^{G} $. Furthermore, assume that the action of $ G $ is free, then 
 	the 
 	canonical morphism $ p:\text{Sp}(A)\to \text{Sp}(A^{G}) $ is an \'etale 
 	covering.
 \end{lem}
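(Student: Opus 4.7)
The plan is to address the two assertions separately: first that $A^G$ is a $K$-affinoid algebra with $A$ finite over it, and second that the quotient morphism is étale when the action is free. For the finiteness part I would begin from the observation that every $a \in A$ satisfies the monic polynomial $P_a(T) := \prod_{g \in G}(T - g(a))$ whose coefficients are manifestly $G$-invariant, giving integrality of $A$ over $A^G$. Choosing $K$-affinoid generators $a_1, \ldots, a_n$ of $A$ and letting $B \subset A^G$ be the $K$-subalgebra generated by the coefficients of $P_{a_1}, \ldots, P_{a_n}$, the algebra $A$ becomes a finite $B$-module. Equipping $A$ with a $G$-invariant Banach norm (obtained by taking the maximum of a chosen affinoid norm over the $G$-orbit), $A^G$ is a closed $K$-subspace of $A$; rescaling the coefficients of the $P_{a_i}$ so they all have norm at most one realizes $B$ as an affinoid $K$-subalgebra of $A^G$. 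Since $A$ is Noetherian and finite over $B$, and $A^G$ is a $B$-submodule of $A$, we deduce $A^G$ is finite over $B$, which both produces an affinoid structure on $A^G$ and expresses $A$ as finite over $A^G$.

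For the étaleness assertion under a free $G$-action, I would exploit that for each maximal ideal $\mathfrak{m} \subset A$ the orbit $G \cdot \mathfrak{m}$ consists of $|G|$ distinct maximal ideals lying above $\mathfrak{n} := \mathfrak{m} \cap A^G$. Using the separation property of $K$-affinoid algebras, I would produce functions $e_g \in A$ with $e_g(h\mathfrak{m}) = \delta_{g,h}$ and combine them via the Reynolds operator $a \mapsto |G|^{-1}\sum_{g \in G} g(a)$ to construct orthogonal idempotents in the semilocal ring $A \otimes_{A^G} (A^G)_{\mathfrak{n}}$, exhibiting it as a product of $|G|$ local rings, each mapping isomorphically onto $(A^G)_{\mathfrak{n}}$ modulo its maximal ideal. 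This decomposition simultaneously yields local freeness of $A$ over $A^G$ (hence flatness), triviality of the residue field extensions, and vanishing of the module of differentials $\Omega_{A/A^G}$, giving the standard characterization of a finite étale cover.

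The hardest part, I expect, is making the affinoid structure on $A^G$ fully rigorous: verifying that the candidate affinoid topology coincides with its subspace topology from $A$ and is independent of the choice of generators, so that the formal finiteness statement upgrades to one about $K$-affinoid algebras in the rigid-analytic sense. Once this is settled, the étale assertion is essentially a direct translation of the freeness hypothesis into an explicit idempotent decomposition.
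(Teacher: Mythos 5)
The paper does not prove this lemma at all: it is quoted verbatim from [BGR, 6.3.3], so there is no internal proof to compare against. Your first paragraph essentially reconstructs the BGR argument (integrality via $P_a(T)=\prod_{g\in G}(T-g(a))$, the affinoid subalgebra $B\subset A^G$ generated by the rescaled coefficients, finiteness of $A$ over $B$, and then finiteness of the submodule $A^G$), and your worry about the affinoid structure on $A^G$ resolves itself: once $A^G$ is a finite module over the affinoid algebra $B$ it is a quotient of some $B\langle X_1,\dots,X_m\rangle$, hence affinoid, and the Banach topology on an affinoid algebra is unique, so no independence check is needed.

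Two steps do need more than you give them. First, the deduction ``$a_1,\dots,a_n$ are affinoid generators integral over $B$, hence $A$ is finite over $B$'' is not the elementary fact that an algebra generated by finitely many integral elements is finite: the $a_i$ only generate $A$ \emph{topologically}, so you must invoke the nontrivial affinoid result (BGR 6.3.3/1--2, resting on Weierstrass division) that a morphism of affinoid algebras whose target is topologically generated by elements integral over the source is finite; this is where the real content of the first assertion lives. Also, the Noetherian hypothesis you need to pass from ``$A$ finite over $B$'' to ``the $B$-submodule $A^G$ is finite'' is Noetherianity of $B$, not of $A$. Second, in the \'etale part, the semilocal ring $A\otimes_{A^G}(A^G)_{\mathfrak n}$ does not split into a product of local rings merely because it is finite over a local ring; to extract your orthogonal idempotents you should either pass to the completion or Henselization of $(A^G)_{\mathfrak n}$, or argue on the fibre $A\otimes_{A^G}k(\mathfrak n)$ (a finite-dimensional $k(\mathfrak n)$-algebra, which does split) to get unramifiedness and the rank count $|G|$, and obtain flatness separately, e.g.\ from Mumford's isomorphism $A\otimes_{A^G}A\simeq\prod_{g\in G}A$ for free actions. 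The Reynolds operator also presupposes $|G|$ invertible; this holds here because $K$ has characteristic zero, but it should be said. With these repairs your argument is the standard one and is correct.
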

 
 \begin{thm}\label{thm3.3}
 	Let $ X $ be a separated rigid space over $ K $. Let $ \Gamma $ acts 
 	discontinuously 
 	and freely on $ X $. Then
 	\begin{enumerate}
 		\item The quotient $ X_{\Gamma} $ exists and it is seperated.
 		\item The canonical projective morphism $ p:X\to X_{\Gamma} $ 
 		is an \'etale covering. In particular, if $ X $ is a smooth rigid 
 		space, 
 		so does $ X_{\Gamma}$.
 		\item Moreover, we have a canonical isomorphism
 		\begin{align*}
 		\xymatrix@R=2ex{
 			\Gamma \times X=	\coprod_{g\in \Gamma}X\ar@{=>}[r]
 			& X\times_{X_{\Gamma}}X\\
 			(g,z)\ar@{|->}[r] & (gz,z).	
 		}
 		\end{align*}
 	\end{enumerate}  
 \end{thm}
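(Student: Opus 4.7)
The plan is to construct $X_\Gamma$ by gluing affinoid charts on which the $\Gamma$-action becomes locally trivial, and then verify the three assertions in turn. The crucial preliminary step is to produce an admissible affinoid covering $\{U_\alpha\}$ of $X$ with the strong separation property that $\gamma U_\alpha \cap U_\alpha = \emptyset$ for every $\gamma \in \Gamma \setminus \{1\}$. To obtain this, I would start with any admissible affinoid covering $\{Y_i\}$ provided by the discontinuity assumption, so that each $\Gamma_i := \{\gamma \in \Gamma : \gamma Y_i \cap Y_i \neq \emptyset\}$ is finite. Given a point $x \in Y_i$, the freeness of the action forces $\gamma x \neq x$ for every $\gamma \in \Gamma_i \setminus \{1\}$, and separatedness of $X$ allows me to choose disjoint affinoid neighborhoods of $x$ and $\gamma x$; intersecting the resulting finitely many affinoids gives a neighborhood $U_\alpha \subset Y_i$ of $x$ with the desired disjointness from all its non-trivial translates.

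With this covering in hand, I would assemble $X_\Gamma$ by gluing: identify the admissible open $U_\alpha \cap \gamma U_\beta \subset U_\alpha$ with $\gamma^{-1}(U_\alpha \cap \gamma U_\beta) \subset U_\beta$ via $\gamma^{-1}$, for each $\gamma \in \Gamma$ (discontinuity ensures only finitely many $\gamma$ contribute non-trivially to any fixed pair). This produces a rigid space $X_\Gamma$ together with a canonical morphism $p : X \to X_\Gamma$ that identifies each $U_\alpha$ isomorphically with an affinoid $V_\alpha := p(U_\alpha) \subset X_\Gamma$. The universal property demanded by the definition of the quotient follows directly from the gluing construction, since a $\Gamma$-invariant morphism out of $X$ descends uniquely chart-by-chart.

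The three assertions are then verified as follows. For (2), by the disjoint-translates property we have $p^{-1}(V_\alpha) = \coprod_{\gamma \in \Gamma} \gamma U_\alpha$ as an admissible disjoint union (overlap of $\gamma U_\alpha$ and $\gamma' U_\alpha$ would force $\gamma^{-1}\gamma' U_\alpha \cap U_\alpha \neq \emptyset$, hence $\gamma = \gamma'$), with each component mapping isomorphically onto $V_\alpha$; thus $p$ is an \'etale covering, which recovers the trivial instance of Lemma \ref{lem3.2} with finite group $\{1\}$, and smoothness of $X$ descends to $X_\Gamma$. For (3), the morphism $(g, z) \mapsto (gz, z)$ from $\Gamma \times X = \coprod_{g \in \Gamma} X$ to $X \times_{X_\Gamma} X$ is checked locally over $V_\alpha$: the target decomposes as $\coprod_{\gamma, \gamma'} (\gamma U_\alpha \times_{V_\alpha} \gamma' U_\alpha) \cong \Gamma \times \Gamma \times V_\alpha$, and the map realizes the bijection $(g, \gamma') \leftrightarrow (\gamma, \gamma')$ with $\gamma = g\gamma'$, which is an isomorphism componentwise.

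Finally, the separatedness of $X_\Gamma$ in (1) follows from (3) by \'etale descent: the pullback of the diagonal $X_\Gamma \hookrightarrow X_\Gamma \times X_\Gamma$ along the \'etale surjection $X \times X \to X_\Gamma \times X_\Gamma$ is precisely $X \times_{X_\Gamma} X = \coprod_\gamma X \hookrightarrow X \times X$, whose image is the union of graphs of the automorphisms $\gamma$; these graphs are closed by separatedness of $X$ and pairwise disjoint by freeness, and discontinuity ensures only finitely many of them meet any given affinoid, so the union is genuinely a closed immersion and the property descends. The main technical obstacle, in my view, lies precisely here: verifying that the ad hoc gluing in the second paragraph yields a bona fide rigid space in the strict Grothendieck-topological sense (admissibility of coverings, sheaf condition) and that the closed-immersion property of the disjoint union of graphs descends along the \'etale cover; once this bookkeeping is settled, the remainder of the argument is formal.
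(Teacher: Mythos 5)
Your construction takes a genuinely different route from the paper's: you attempt to trivialize the action by shrinking to an admissible affinoid covering $\{U_\alpha\}$ with $\gamma U_\alpha\cap U_\alpha=\emptyset$ for all $\gamma\neq 1$ and then glue, whereas the paper keeps the original covering $\{X_i\}$ furnished by discontinuity, defines $X_\Gamma$ as the set-theoretic quotient with structure sheaf $\mathcal{O}(U):=\mathcal{O}_X(p^{-1}(U))^{\Gamma}$, shows that $p$ identifies $Y_i=p(X_i)$ with the finite quotient $\Gamma_i\backslash X_i$, and invokes Lemma \ref{lem3.2} (invariants of an affinoid algebra under a finite group, with freeness giving an \'etale covering) to conclude. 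The difference is not cosmetic, because your very first step is where the real difficulty sits.

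The gap: choosing, for each point $x$, an affinoid neighborhood $U_\alpha\subset Y_i$ disjoint from its finitely many relevant translates is fine pointwise, but the resulting family $\{U_\alpha\}$ is a covering of each affinoid $Y_i$ by affinoid subdomains indexed by the points of $Y_i$, and such a covering need not be admissible in the strong Grothendieck topology. (The closed unit disc covered by all smaller closed discs together with the boundary annulus is the standard counterexample: every classical point is covered, yet the covering is not admissible.) Without admissibility of $\{U_\alpha\}$ you cannot glue, cannot assert that $p^{-1}(V_\alpha)=\coprod_{\gamma}\gamma U_\alpha$ is an admissible disjoint union, and the remainder of the argument has nothing to stand on; you flag this as ``bookkeeping'' at the end, but it is the central point. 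To repair it you would need a \emph{finite} admissible refinement of each $X_i$ by affinoid subdomains each moved off itself by every nontrivial $\gamma$ with $\gamma X_i\cap X_i\neq\emptyset$ --- e.g.\ by bounding $|f(\gamma x)-f(x)|$ away from zero on the affinoid $X_i\cap\gamma^{-1}X_i$ using freeness and the maximum modulus principle --- and establishing this is essentially equivalent to the local triviality that the paper extracts from Lemma \ref{lem3.2}, the tool your proposal bypasses. A smaller issue: your descent argument for separatedness presupposes that the locally finite union of graphs is a closed immersion, which again rests on the unproved admissibility statement; the paper instead verifies directly that the intersections $Y_i\cap Y_j$ are affinoid.
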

 \begin{proof}
 	We briefly sketch the construction of the quotient. First of all 
 	$X_{\Gamma} $ is the set theoretical quotient and $ p:X\to X_{\Gamma} $ is 
 	the canonical surjective map. A subset $ U $ 
 	of $X_{\Gamma} $ is called admissible if $ p^{-1}(U) $ is 
 	admissible. A collection of admissible sets $ \{U_{i}\} $, with union 
 	the adimissible $ U $, is an admissible covering if $ \{p^{-1}(U_{i})\} 
 	$ is an admissible covering of $ p^{-1}(U) $. The structure sheaf $ 
 	\mathcal{O} $ on $ X_{\Gamma}$ is defined by 
 	\[\mathcal{O}(U):=\mathcal{O}_{X}(p^{-1}(U))^{\Gamma}.  \]
 	Obviously, $ p:X\to X_{\Gamma} $ then is a morphism of ringed 
 	spaces over $ K $.
 	
 	Since $ \Gamma $ acts discontinuously on $ X $, there exists an 
 	admissible affinoid covering  $ \{X_{i}\}_{i\in I} $ such that for every $ 
 	i\in 
 	I $, $ \{\gamma\in \Gamma~|~\gamma X_{i}\cap X_{i}\neq \emptyset\} $ is 
 	a finite set. It follows from the assumptions 
 	that $ \{X_{i}\}_{i\in I} $ satisfies the following two conditions:
 	\begin{itemize}
 		\item For all $ i $, $ \Gamma_{i}:=\{\gamma \in \Gamma~|~\gamma 
 		X_{i}=X_{i}\} $ is a finite group.
 		\item For any $ i,j\in I $, $ X_{j}\cap \gamma(X_{i})=\emptyset $ 
 		for all but finitely many $ \gamma\in \Gamma $.
 	\end{itemize}
 	Put $ Y_{i}:=p(X_{i}) $. The set $ Y_{i} $ is admissible since $ 
 	p^{-1}(Y_{i})=\cup_{\gamma \in \Gamma}\gamma X_{i} $ is admissible in $ X 
 	$: 
 	for any $ j\in I $, one has
 	\[ X_{j}\bigcap \left( \bigcup_{\gamma \in \Gamma}\gamma X_{i} \right) 
 	=\bigcup_{k=1}^{r}\left( \gamma_{k}X_{i}\cap X_{j}\right)  \]
 	is admissible open subset.
 	Moreover, we see that $ p $ induces an isomorphism of $ G $-ringed space
 	\[  \Gamma_{i}\backslash X_{i}\simeq Y_{i}. \]
 	Then by Lemma \ref{lem3.2} we see that $ \{Y_{i}\}_{i\in I} $ form an 
 	admissible covering of $ X_{\Gamma}$ by affinoid varities over $ K $; the 
 	intersection $ Y_{i}\cap Y_{j} $ are affinoid. Therefore $ X_{\Gamma}$ is a 
 	separated analytic variety over $ K $ and the canonical projective 
 	morphism $ p:X\to X_{\Gamma} $ is an \'etale covering by Lemma 
 	\ref{lem3.2} again. Thus we get the first and second assertion.
 	
 	The proof of the last assertion is the same as \cite{1} 
 	Theorem 2 in Section 5. (compare	\cite{8} Proposition 2 on p.70 ).
 	
 \end{proof}

 Let $ \mathcal{V} $ be the category of smooth separated rigid analytic 
 varieties over $ K $ equipped with a fixed Grothendieck topololgy which we 
 assume to be finer than the analytic topology. (See \cite{1})
 \begin{lem}
 	Let $ X $ be a separated smooth rigid space and group $ \Gamma $ acts 
 	discontinuously, freely on $ X $, then
 	\begin{enumerate}
 		\item For any sheaf $ \mathcal{F} $ on $ \mathcal{V} $, we have
 		\[ \mathcal{F}(X_{\Gamma})=\mathcal{F}(X)^{\Gamma} .\]
 		\item For any injective sheaf $ \mathcal{F} $ on $ \mathcal{V} $ the $ 
 		\Gamma $-module $ \mathcal{F}(X) $ is injective.
 	\end{enumerate}	
 \end{lem}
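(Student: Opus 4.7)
My approach is to deduce both assertions from the \'etale covering $p\colon X \to X_\Gamma$ together with the fibre-product identification $X \times_{X_\Gamma} X \simeq \coprod_{\gamma \in \Gamma} X$ established in Theorem \ref{thm3.3}.

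For (1), I would apply the sheaf axiom of $\mathcal{F}$ directly to the cover $p$. Since the Grothendieck topology on $\mathcal{V}$ is chosen (as in \cite{1}) to be finer than the analytic topology and to include \'etale coverings of the type produced by Theorem \ref{thm3.3}(2), the sheaf property yields
$$\mathcal{F}(X_\Gamma) \;=\; \mathrm{eq}\bigl(\mathcal{F}(X) \rightrightarrows \mathcal{F}(X \times_{X_\Gamma} X)\bigr).$$
Using the decomposition $X \times_{X_\Gamma} X = \coprod_{\gamma \in \Gamma} X$ from Theorem \ref{thm3.3}(3), the right-hand term becomes $\prod_{\gamma} \mathcal{F}(X)$. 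Under the bijection $(\gamma, z) \mapsto (\gamma z, z)$, the two projections pull back to $s \mapsto (\gamma \cdot s)_\gamma$ and $s \mapsto (s)_\gamma$ respectively, so the equalizer is precisely $\mathcal{F}(X)^\Gamma$.

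For (2), the plan is a standard adjoint-functor argument: produce an exact left adjoint $L\colon \Gamma\text{-Mod} \to \mathrm{Sh}(\mathcal{V})$ to the functor $\sigma\colon \mathcal{F} \mapsto \mathcal{F}(X)$, from which it is formal that $\sigma$ preserves injectives. Explicitly, for a $\Gamma$-module $M$, I would take $L(M)$ to be the sheafification of the presheaf
$$U \longmapsto M \otimes_{\mathbb{Z}[\Gamma]} \mathbb{Z}\bigl[\mathrm{Hom}_{\mathcal{V}}(U, X)\bigr],$$
with $\Gamma$ acting on $\mathbb{Z}[\mathrm{Hom}_{\mathcal{V}}(U, X)]$ by post-composition. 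The adjunction $\mathrm{Hom}_{\mathrm{Sh}}(L(M), \mathcal{F}) \cong \mathrm{Hom}_\Gamma(M, \mathcal{F}(X))$ then follows by combining Yoneda with the sheafification-inclusion adjunction. Exactness of $L$ reduces to the claim that $\mathbb{Z}[\mathrm{Hom}_\mathcal{V}(U, X)]$ is a free $\mathbb{Z}[\Gamma]$-module for every non-empty $U$, which follows from the freeness of the $\Gamma$-action on $X$: any relation $\gamma \circ f = f$ forces $\gamma$ to fix $f(u)$ for any point $u \in U$, hence $\gamma = 1$.

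The main point to watch is the interaction of the chosen Grothendieck topology with both steps: the sheaf axiom applied to $p$ in (1), and the behaviour of sheafification vis-\`a-vis the presheaf-level tensor product in (2). Once this bookkeeping is carried out in line with \cite{1}, both assertions are formal consequences of Theorem \ref{thm3.3}.
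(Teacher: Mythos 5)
Your proposal is correct and follows essentially the same route as the paper, whose proof consists of citing Lemma 1 in Section 5 of \cite{1}: part (1) is the sheaf axiom for the covering $p\colon X\to X_{\Gamma}$ combined with the identification $X\times_{X_{\Gamma}}X\simeq\coprod_{\gamma\in\Gamma}X$ from Theorem \ref{thm3.3}, and part (2) is the exact left adjoint $M\mapsto\bigl(U\mapsto M\otimes_{\mathbb{Z}[\Gamma]}\mathbb{Z}[\mathrm{Hom}_{\mathcal{V}}(U,X)]\bigr)^{\sim}$, whose exactness rests on the freeness of the $\Gamma$-action. The only points to make explicit are that the chosen topology on $\mathcal{V}$ does admit $p$ as a covering and that the value of the presheaf on the empty object (where the tensor functor is not exact) is irrelevant after sheafification.
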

 \begin{proof}
 	The proof is the same as in \cite{1}, Lemma 1 in Section 5.
 \end{proof}

 \begin{prop}\label{prop3.5}
 	Let $ X $ be a separated smooth rigid space and group $ \Gamma $ acts 
 	discontinuously, freely on $ X $, then we have the \textit{covering 
 		spectral sequence}
 	\[ E_{2}^{r,s}=H^{r}(\Gamma,H^{s}_{\dR}(X))\Longrightarrow 
 	H_{\dR}^{r+s}(X_{\Gamma}). \]
 \end{prop}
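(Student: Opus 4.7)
The plan is to apply the Grothendieck spectral sequence for a composition of two left exact functors, then extend it from sheaves to complexes of sheaves via a Cartan--Eilenberg resolution of the de Rham complex. This is essentially the argument of Schneider--Stuhler in \cite{1}, Section~5, Proposition~2. Their argument for the special case $X = \mathfrak{X}_K$ uses only the separatedness and smoothness of $X$ together with the discontinuity and freeness of the $\Gamma$-action, all of which are supplied in the present generality by Theorem~\ref{thm3.3} and the preceding lemma, so the strategy transports verbatim.

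First I would introduce the pair of left exact functors
\[ F_1 \colon \mathrm{Sh}(\mathcal{V}) \longrightarrow \Gamma\text{-}\mathrm{Mod}, \quad \mathcal{F} \longmapsto \mathcal{F}(X), \qquad F_2 \colon \Gamma\text{-}\mathrm{Mod} \longrightarrow \mathrm{Ab}, \quad M \longmapsto M^{\Gamma}. \]
Part (1) of the preceding lemma gives $F_2 \circ F_1(\mathcal{F}) = \mathcal{F}(X)^{\Gamma} = \mathcal{F}(X_\Gamma)$, and part (2) says that $F_1$ sends injective sheaves to injective $\Gamma$-modules, which are in particular $F_2$-acyclic. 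Grothendieck's theorem on the composition of derived functors then produces, for every abelian sheaf $\mathcal{F}$ on $\mathcal{V}$, a spectral sequence
\[ E_2^{r,s} = H^{r}(\Gamma, H^{s}(X, \mathcal{F})) \Longrightarrow H^{r+s}(X_\Gamma, \mathcal{F}). \]

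Next I would promote this from a single sheaf to the de Rham complex. Choose a Cartan--Eilenberg resolution $\Omega^{\bullet}_{X_\Gamma} \to I^{\bullet,\bullet}$ by injective sheaves on $\mathcal{V}$. Since $p \colon X \to X_\Gamma$ is \'etale by Theorem~\ref{thm3.3}, the pullback identifies $p^{\ast}\Omega^{\bullet}_{X_\Gamma} \simeq \Omega^{\bullet}_{X}$, so evaluating the resolution at $X$ produces a double complex $I^{\bullet,\bullet}(X)$ of injective $\Gamma$-modules whose total complex represents $H^{\bullet}_{\dR}(X)$ as a $\Gamma$-module. Taking $\Gamma$-invariants, and using $I^{p,q}(X)^{\Gamma} = I^{p,q}(X_\Gamma)$ from the lemma, yields a double complex whose total complex computes $H^{\bullet}_{\dR}(X_\Gamma)$. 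The column filtration of the double complex $I^{\bullet,\bullet}(X)^{\Gamma}$ then gives a spectral sequence with $E_2$-page $H^{r}(\Gamma, H^{s}_{\dR}(X))$ abutting to $H^{r+s}_{\dR}(X_\Gamma)$, which is exactly the desired covering spectral sequence.

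The main technical obstacle is the hypercohomology bookkeeping: one must verify that the two filtration spectral sequences of the double complex $I^{\bullet,\bullet}(X)^{\Gamma}$ converge strongly and that the abutment is genuinely the de Rham hypercohomology $\mathbb{H}^{\bullet}(X_\Gamma, \Omega^{\bullet}_{X_\Gamma}) = H^{\bullet}_{\dR}(X_\Gamma)$. This is standard once the Cartan--Eilenberg resolution and the injectivity property from the preceding lemma are simultaneously in hand; no geometric input beyond the \'etaleness of $p$ established in Theorem~\ref{thm3.3} is required.
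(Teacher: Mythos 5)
Your proposal is correct and follows essentially the same route as the paper, which simply invokes the preceding lemma (sections over $X_\Gamma$ are $\Gamma$-invariants of sections over $X$, and injective sheaves evaluate to injective $\Gamma$-modules) together with the Grothendieck spectral sequence for the composite functor, deferring the details to Schneider--Stuhler. Your write-up merely makes explicit the Cartan--Eilenberg bookkeeping that the paper leaves to the citation.
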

 \begin{proof}
 	It follows from the above lemmas and Grothendieck spectral sequence. (See 
 	\cite{1}, Proposition 2 in Section 5)
 \end{proof}
 
 In particular, we have the following corollary.
 \begin{cor}
 	Let $ \Gamma $ be a 
 	discrete torsion-free subgroup of $ G\times G' $. Then the quotient 
 	$X_{\Gamma}:=\Gamma \backslash 
 	(\mathfrak{X}_{K}\times_{K''}\mathfrak{X}_{K'})$ becomes a smooth rigid 
 	variety 
 	over $ K'' $ and  there exists a covering 
 	spectral sequence for $ H_{dR}^{n}(X_{\Gamma}) $
 	\begin{align}
 	E_{2}^{r,s}=H^{r}(\Gamma,H_{\dR}^{s}(\mathfrak{X}_{K}\times 
 	_{K''}\mathfrak{X}_{K''}))\Longrightarrow H_{\dR}^{r+s}(X_{\Gamma}). 
 	\label{cover}
 	\end{align}
 \end{cor}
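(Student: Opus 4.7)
The plan is to reduce the statement to Theorem \ref{thm3.3} and Proposition \ref{prop3.5} by verifying that $\Gamma$ acts discontinuously and freely on $X := \mathfrak{X}_{K}\times_{K''}\mathfrak{X}_{K'}$, which is a separated smooth rigid space over $K''$ as a fiber product of such spaces.

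First, I would check that $\Gamma$ acts freely. Suppose $(g,g') \in \Gamma$ fixes a point $(x,x') \in X$. Then $g$ stabilizes $x \in \mathfrak{X}_{K}$ and $g'$ stabilizes $x' \in \mathfrak{X}_{K'}$. The stabilizer in $G = PGL_{d+1}(K)$ of a point of $\mathfrak{X}_{K}$ is compact (and similarly for $G'$), so $(g,g')$ lies in a compact subgroup of $G \times G'$. Since $\Gamma$ is discrete, its intersection with this compact subgroup is finite, and torsion-freeness of $\Gamma$ forces $(g,g') = 1$.

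Next, I would show the action is discontinuous. By \cite{1}, Section 1, one may choose an admissible affinoid covering $\{U_i\}$ of $\mathfrak{X}_{K}\otimes_{K}K''$ with $\{g \in G : gU_i \cap U_i \neq \emptyset\}$ finite for each $i$, and similarly a covering $\{U'_j\}$ of $\mathfrak{X}_{K'}\otimes_{K'}K''$ with the analogous finiteness for $G'$. Then $\{U_i \times_{K''} U'_j\}$ is an admissible affinoid covering of $X$, and
\[
\{(g,g') \in \Gamma : (g,g')(U_i \times_{K''} U'_j) \cap (U_i \times_{K''} U'_j) \neq \emptyset\}
\]
is contained in the product of the two finite sets above, hence is itself finite.

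Having verified these two hypotheses, Theorem \ref{thm3.3} implies that $X_{\Gamma}$ exists as a smooth separated rigid analytic variety over $K''$ and that $p : X \to X_{\Gamma}$ is an étale covering, while Proposition \ref{prop3.5} immediately produces the covering spectral sequence (\ref{cover}). I do not anticipate a serious obstacle here: the only subtlety is the compactness of point stabilizers in $G$ and $G'$ used to deduce freeness, which is a standard fact about the action of $PGL_{n+1}$ on Drinfeld's symmetric space and is used implicitly in the classical one-factor case of \cite{1}.
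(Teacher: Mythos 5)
Your overall route is exactly the paper's: verify that $\Gamma$ acts discontinuously and freely on $X=\mathfrak{X}_{K}\times_{K''}\mathfrak{X}_{K'}$ and then invoke Theorem \ref{thm3.3} and Proposition \ref{prop3.5}. The paper in fact gives no more detail than that one sentence, so your filled-in verification is welcome, and the freeness argument (point stabilizers lie over stabilizers of points of the Bruhat--Tits building, hence are compact; a discrete torsion-free group meets a compact subgroup trivially) is correct.

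There is, however, one misstatement in your discontinuity step. No admissible affinoid covering $\{U_i\}$ of $\mathfrak{X}_{K}\otimes_K K''$ can have $\{g\in G : gU_i\cap U_i\neq\emptyset\}$ \emph{finite}: for the standard coverings (preimages of simplices under the reduction map to the building) this set contains the full compact open stabilizer $B_\sigma$, and in general it always contains an open neighborhood of the identity in $G$, hence is infinite. What is true, and what \cite{1} provides, is that these sets are \emph{compact} (open) in $G$, and likewise for $G'$. Your containment
\[
\{(g,g')\in\Gamma : (g,g')(U_i\times_{K''}U'_j)\cap(U_i\times_{K''}U'_j)\neq\emptyset\}\subseteq \{g : gU_i\cap U_i\neq\emptyset\}\times\{g' : g'U'_j\cap U'_j\neq\emptyset\}
\]
is fine, but the right-hand side is only a compact subset of $G\times G'$; finiteness of the left-hand side then follows because a discrete subgroup meets a compact set in a finite set --- note that the definition of discontinuity in the paper only asks for finiteness of $\{\gamma\in\Gamma : \gamma X_i\cap X_i\neq\emptyset\}$, not of the corresponding subset of $G\times G'$. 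This is the same ``discrete $\cap$ compact $=$ finite'' principle you already used for freeness, so the repair is immediate, but as written the claim of finiteness of the subsets of $G$ and $G'$ is false.
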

 \begin{proof}
 	Let $ \Gamma $ be a torsion free and discrete subgroup in $ G\times G' $, 
 	then the action of $ \Gamma $ on $ 
 	\mathfrak{X}_{K}\times_{K''}\mathfrak{X}_{K'}$ is discontinuous and free.	
 	Now the assertion follow from Theorem \ref{thm3.3} and Proposition 
 	\ref{prop3.5}.
 \end{proof}
 
 Let us compute the $ E_{2} $-terms in our covering spectral sequence 
 (\ref{cover}). First by corolary \ref{cor2.3}  we get
 \begin{align}
 H^{r}(\Gamma,H_{\dR}^{s}(\mathfrak{X}_{K}\times 
 _{K''}\mathfrak{X}_{K''}))&=H^{r}\left( 
 \Gamma,\bigoplus_{i+j=s}\text{Hom}_{K''}(V_{i}(K''),K'')\otimes_{K''}\text{Hom}_{K''}
 (V'_{j}(K''),K'') \right)  \notag \\
 &=\bigoplus_{i+j=s}H^{r}\left(\Gamma,\text{Hom}_{K''}(V_{i}(K''),K'')\otimes_{K''}\text{Hom}_{K''}
 (V'_{j}(K''),K'') \label{3.1}\right)
 \end{align}
 
 In order to compute (\ref{3.1}) we need some important theorems of Schneider 
 and 
 Stuhler. (\cite{1}, 
 Section 6). Let $ \mathcal{BT} $ (resp. $ \mathcal{BT'} $) be the Bruhat-Tits 
 building of 
 the group $ G $ (resp. $ G' $). (\cite{1}, Section 1) Let $ \sigma $ (resp. $ 
 \sigma' $) be a 
 simplex in $ \mathcal{BT} $ (resp. $ \mathcal{BT'} $). The stabilizer $ 
 B_{\sigma} $ of $ \sigma $ (resp. $ B_{\sigma'} $ of $ \sigma' $) has a 
 unique maximal normal pro-$ p $-subgroup $ U_{\sigma} $ (resp. $ U_{\sigma'} 
 $) 
 which itself is compact 
 open in $ G $ (resp. $ G' $).
 \begin{lem}(\cite{1}, Section 6)\label{lem3.7}
 	If $ A $ is any 
 	abelian group and $ I\subset \{1,\cdots,d\} $, then
 	\begin{enumerate}
 		\item  The $ G $-module $ V_{I}(A) 
 		$ 
 		has a projective resolution $ P^{\bullet} $ of 
 		finitely 
 		generated  $ 
 		A[G] $-modules
 		\[ 0\to \oplus_{\tau \in 
 			\mathcal{BT}_{d}}V_{I}(A)^{U_{\tau}}\to \cdots \to 
 		\oplus_{\sigma\in \mathcal{BT}_{0}}V_{I}(A)^{U_{\sigma}}\to V_{I}(A)\to 
 		0. 
 		\]
 		\item Furthermore, if $ \Gamma \subset G=PGL_{d+1}(K) $ is a cocompact 
 		discrete 
 		subgroup. Then the above resolution of $ V_{I}(A) $ is a projective 
 		resolution of finitely 
 		generated free $ 
 		A[\Gamma] $-modules.
 	\end{enumerate}	
 	
 \end{lem}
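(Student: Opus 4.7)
The plan is to reconstruct Schneider--Stuhler's argument (\cite{1}, Section~6) by taking the augmented oriented chain complex of the Bruhat--Tits building $\mathcal{BT}$ with coefficient system $\sigma \mapsto V_I(A)^{U_\sigma}$. Concretely, I would set $C_i := \bigoplus_{\sigma \in \mathcal{BT}_i} V_I(A)^{U_\sigma}$; whenever $\tau$ is a face of $\sigma$, the inclusion $U_\tau \supset U_\sigma$ induces $V_I(A)^{U_\sigma} \hookrightarrow V_I(A)^{U_\tau}$, and combining these with the simplicial boundary (with signs from a chosen orientation on the building) defines $\partial: C_i \to C_{i-1}$. The augmentation $\epsilon: C_0 \to V_I(A)$ is the sum of the inclusions $V_I(A)^{U_\sigma} \hookrightarrow V_I(A)$.

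For exactness, two ingredients combine. First, $\mathcal{BT}$ is contractible, so the augmented oriented chain complex with \emph{constant} coefficients is acyclic. Second, $V_I(A) = \bigcup_\sigma V_I(A)^{U_\sigma}$ because $V_I(A)$ is smooth. Given $v \in V_I(A)$, the support of $v$ in the coefficient system is the sub-complex of simplices $\sigma$ with $v \in V_I(A)^{U_\sigma}$; one must verify that this sub-complex is itself contractible (or at least augmented-acyclic), and then pass to a limit over $v$. This interplay is the technical heart of \cite{1}, and I would follow it essentially verbatim.

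To upgrade each $C_i$ to a projective smooth $A[G]$-module, note that $G$ has only finitely many orbits on $\mathcal{BT}_i$ (the finitely many types of $i$-simplices), so
\[ C_i \;=\; \bigoplus_{[\sigma] \in G \backslash \mathcal{BT}_i} \mathrm{ind}_{B_\sigma}^G \bigl( V_I(A)^{U_\sigma} \bigr). \]
Each $V_I(A)^{U_\sigma}$ is a finitely generated $A$-module on which the finite group $B_\sigma/U_\sigma$ acts; Schneider--Stuhler check that it is projective as an $A[B_\sigma]$-module (in fact it admits a description as a direct summand of a permutation module coming from flags in a finite-dimensional $k_K$-vector space), and compact induction from a compact open subgroup then produces a projective object in the category of smooth $A[G]$-modules, finitely generated because the number of orbit types is finite.

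For part~(2), I take $\Gamma$ to be torsion-free and cocompact (as in Theorem~\ref{main}). Since every $gB_\sigma g^{-1}$ is compact, torsion-freeness gives $\Gamma \cap gB_\sigma g^{-1} = \{1\}$ for all $g \in G$, and from the double coset decomposition $G = \bigsqcup_{g \in \Gamma \backslash G / B_\sigma} \Gamma g B_\sigma$ one obtains
\[ \mathrm{Res}^G_\Gamma \,\mathrm{ind}_{B_\sigma}^G \bigl( V_I(A)^{U_\sigma} \bigr) \;\simeq\; \bigoplus_{\Gamma \backslash G / B_\sigma} A[\Gamma] \otimes_A V_I(A)^{U_\sigma}, \]
a free $A[\Gamma]$-module; cocompactness makes each $\Gamma \backslash G / B_\sigma$ finite, so each $C_i$ is finitely generated free over $A[\Gamma]$. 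The main obstacle I anticipate is the exactness statement for an arbitrary coefficient ring $A$: projectivity over $A[G]$ and freeness over $A[\Gamma]$ are essentially formal once the building is in hand, but exactness requires the delicate simultaneous analysis of the pro-$p$ structure of $U_\sigma$ and the geometry of $\mathcal{BT}$ carried out in \cite{1}, Section~6.
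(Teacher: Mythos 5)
The paper gives no proof of this lemma at all---it is quoted directly from Schneider--Stuhler \cite{1}, Section 6---and your reconstruction (the chain complex of $\mathcal{BT}$ with coefficient system $\sigma \mapsto V_{I}(A)^{U_{\sigma}}$, exactness via contractibility of the support subcomplexes, compact induction for projectivity, and the Mackey decomposition together with $\Gamma \cap gB_{\sigma}g^{-1}=\{1\}$ and finiteness of $\Gamma\backslash G/B_{\sigma}$ for part (2)) is precisely the argument of the cited source, with part (2) carried out in full. One small slip: for $\tau$ a face of $\sigma$ the containment of pro-$p$ radicals is $U_{\tau}\subseteq U_{\sigma}$ (whence $V_{I}(A)^{U_{\sigma}}\subseteq V_{I}(A)^{U_{\tau}}$), not $U_{\tau}\supseteq U_{\sigma}$ as you wrote, although the induced maps in your boundary operator do go in the correct direction.
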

 
 The folloiwng propositon is the key fact to compute those Ext-groups at the $ 
 E_{2} $-terms in covering spectral sequence. (Compare \cite{1}, Proposition 4 
 in Section 5)
 \begin{prop}\label{prop3.8}
 	Let $ \Gamma$ be a discrete torsion-free cocompact 
 	subgroup of $ G\times G' $. Let $ I\subset \{1,\cdots,d\}, J\subset 
 	\{1,\cdots,d'\} $ and $ A $ be any commutative ring.
 	\begin{enumerate}
 		\item  $ V_{I}(A)\otimes_{A}V'_{J}
 		(A)$ has a projetive resolution by finitely generated free $ A[\Gamma] 
 		$-modules.
 		\item 	 We have
 		\[ H^{\ast}(\Gamma,\text{Hom}_{A}(V_{I}(A),A)\otimes_{A}\text{Hom}_{A}
 		(V'_{J}(A),A) 
 		)=H^{\ast}(\Gamma,\text{Hom}_{A}(V_{I}(A)\otimes_{A}V'_{J}(A),A)). \]
 		\item There is a natural exact 
 		sequence
 		\[ 0\to 
 		\text{Ext}^{r}_{\mathbb{Z}[\Gamma]}(V_{I}(\mathbb{Z})\otimes_{\mathbb{Z}}V'_{J}
 		(\mathbb{Z}),\mathbb{Z})
 		\otimes_{\mathbb{Z}}A\to 
 		\text{Ext}^{r}_{A[\Gamma]}(V_{I}(A)\otimes_{A}V'_{J}
 		(A),A)\to 
 		\text{Tor}_{\mathbb{Z}}(\text{Ext}^{r+1}_{\mathbb{Z}[\Gamma]}(V_{I}(\mathbb{Z})
 		\otimes_{\mathbb{Z}}V'_{J}(\mathbb{Z}) 
 		,A)\to 0 \]
 		for any $ r\geq 0 $.
 	\end{enumerate}
 \end{prop}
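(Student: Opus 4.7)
The plan is to adapt the strategy of \cite[Proposition 4 in Section 5]{1}, which treated the one-factor analog, to the product setting. The three parts interlock: (1) produces a projective resolution of $V_{I}(A)\otimes V'_{J}(A)$ as $A[\Gamma]$-module, (2) identifies both $\Gamma$-cohomology groups that appear in (\ref{3.1}) with the same Ext-group built from that resolution, and (3) records how these Ext-groups base change from $\mathbb{Z}$ to $A$.

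For part (1), I would take the tensor product over $A$ of the Schneider--Stuhler resolutions of $V_{I}(A)$ and $V'_{J}(A)$ from Lemma \ref{lem3.7} and form the associated total complex
\[ P^{\bullet}\;:=\;\text{Tot}\bigl(P^{\bullet}_{1}\otimes_{A}P^{\bullet}_{2}\bigr)\longrightarrow V_{I}(A)\otimes_{A}V'_{J}(A). \]
Each $V_{I}(A)^{U_{\sigma}}$ (and similarly $V'_{J}(A)^{U_{\sigma'}}$) is a finitely generated $A$-module, because $G/P_{I}$ is a compact $p$-adic manifold and $U_{\sigma}$ is compact open, so $U_{\sigma}\backslash G/P_{I}$ is finite. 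Writing $V_{I}(A)^{U_{\sigma}}\otimes_{A}V'_{J}(A)^{U_{\sigma'}}$ as the restriction to $G\times G'$ of a compactly induced representation from the compact open subgroup $B_{\sigma}\times B_{\sigma'}$, a Mackey-style decomposition gives the key structural statement: the double coset space $\Gamma\backslash(G\times G')/(B_{\sigma}\times B_{\sigma'})$ is finite by cocompactness of $\Gamma$, and each stabilizer $\Gamma\cap g(B_{\sigma}\times B_{\sigma'})g^{-1}$ is a finite subgroup of the torsion-free group $\Gamma$, hence trivial. So each term of $P^{\bullet}$ is a finite direct sum of modules of the form $A[\Gamma]\otimes_{A}(V_{I}(A)^{U_{\sigma}}\otimes V'_{J}(A)^{U_{\sigma'}})$, which is a finitely generated free $A[\Gamma]$-module.

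For part (2), I would identify both sides with $\text{Ext}^{*}_{A[\Gamma]}(V_{I}(A)\otimes_{A}V'_{J}(A),A)$, computed using the resolution from (1). The RHS identification follows from the $A$-flatness of $V_{I}(A)\otimes V'_{J}(A)$, which forces degeneration of the change-of-rings spectral sequence
\[ H^{p}(\Gamma,\text{Ext}^{q}_{A}(V_{I}(A)\otimes V'_{J}(A),A))\Longrightarrow \text{Ext}^{p+q}_{A[\Gamma]}(V_{I}(A)\otimes V'_{J}(A),A). \]
For the LHS I would work with the double complex $\text{Hom}_{A}(P^{\bullet}_{1},A)\otimes_{A}\text{Hom}_{A}(P^{\bullet}_{2},A)$ with the diagonal $\Gamma$-action, together with the adjunction $\text{Hom}_{A[\Gamma]}(P^{i}_{1}\otimes_{A}P^{j}_{2},A)=\text{Hom}_{A[\Gamma]}(P^{i}_{1},\text{Hom}_{A}(P^{j}_{2},A))$, and produce a comparison map to $\text{Hom}_{A}(P^{\bullet},A)$ that, though not a termwise isomorphism, becomes a quasi-isomorphism after taking $\Gamma$-invariants thanks to the explicit finite-orbit / trivial-stabilizer Mackey structure from (1). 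For part (3), I would take the analogous $\mathbb{Z}[\Gamma]$-resolution $P^{\bullet}_{\mathbb{Z}}$ of $V_{I}(\mathbb{Z})\otimes V'_{J}(\mathbb{Z})$; since its terms are $\mathbb{Z}$-free, $P^{\bullet}_{\mathbb{Z}}\otimes_{\mathbb{Z}}A$ is a projective $A[\Gamma]$-resolution of $V_{I}(A)\otimes V'_{J}(A)$, and finite generation yields
\[ \text{Hom}_{A[\Gamma]}(P^{\bullet}_{\mathbb{Z}}\otimes_{\mathbb{Z}}A,A)\;=\;\text{Hom}_{\mathbb{Z}[\Gamma]}(P^{\bullet}_{\mathbb{Z}},\mathbb{Z})\otimes_{\mathbb{Z}}A. \]
Applying the universal coefficient theorem to the complex of abelian groups $C^{\bullet}:=\text{Hom}_{\mathbb{Z}[\Gamma]}(P^{\bullet}_{\mathbb{Z}},\mathbb{Z})$ then gives the short exact sequence in (3).

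The main obstacle is part (2): the natural map $\text{Hom}_{A}(V_{I},A)\otimes_{A}\text{Hom}_{A}(V'_{J},A)\to \text{Hom}_{A}(V_{I}\otimes V'_{J},A)$ is not an isomorphism of $A[\Gamma]$-modules in general, because $V_{I}(A)$ and $V'_{J}(A)$ are infinite-dimensional over $A$ (and infinite products do not commute with tensor). The equality of $\Gamma$-cohomology therefore cannot be read off the modules themselves and must be extracted at the level of the resolutions, crucially using the compactly induced structure and the finiteness of $\Gamma$-orbits on $\mathcal{BT}\times\mathcal{BT}'$ established in part (1).
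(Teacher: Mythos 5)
Your overall skeleton coincides with the paper's: both arguments tensor the two Schneider--Stuhler resolutions $P_{1}^{\bullet}\to V_{I}(A)$ and $P_{2}^{\bullet}\to V'_{J}(A)$, both deduce that the terms of $P_{1}^{\bullet}\otimes_{A}P_{2}^{\bullet}$ are finitely generated free $A[\Gamma]$-modules from the fact that $\Gamma$ acts freely with finitely many orbits on $\mathcal{BT}\times\mathcal{BT}'$ (your Mackey/double-coset formulation --- finiteness of $\Gamma\backslash (G\times G')/(B_{\sigma}\times B_{\sigma'})$ and triviality of the stabilizers --- is exactly the same finiteness statement), and both obtain part (3) by base-changing the integral resolution to $A$ and applying the universal coefficient theorem to $\text{Hom}_{\mathbb{Z}[\Gamma]}(P_{1}^{\bullet}\otimes_{\mathbb{Z}}P_{2}^{\bullet},\mathbb{Z})$. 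Your parts (1) and (3) are essentially the paper's proof.

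The gap is in part (2), precisely where you yourself locate ``the main obstacle.'' You correctly note that the natural map $\text{Hom}_{A}(V_{I}(A),A)\otimes_{A}\text{Hom}_{A}(V'_{J}(A),A)\to\text{Hom}_{A}(V_{I}(A)\otimes_{A}V'_{J}(A),A)$ need not be an isomorphism, and you then assert that a comparison map of resolutions ``becomes a quasi-isomorphism after taking $\Gamma$-invariants thanks to the Mackey structure.'' But that assertion \emph{is} statement (2); no argument is given for it, and it does not follow formally from part (1): you would still need to show that the terms of $\text{Hom}_{A}(P_{1}^{\bullet},A)\otimes_{A}\text{Hom}_{A}(P_{2}^{\bullet},A)$ are $\Gamma$-acyclic and that its complex of $\Gamma$-invariants agrees with $\text{Hom}_{A[\Gamma]}(P_{1}^{\bullet}\otimes_{A}P_{2}^{\bullet},A)$, neither of which is addressed. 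The paper takes a more direct route here: it asserts the termwise isomorphism of complexes $\text{Hom}_{A}(P_{1}^{\bullet},A)\otimes_{A}\text{Hom}_{A}(P_{2}^{\bullet},A)\simeq\text{Hom}_{A}(P_{1}^{\bullet}\otimes_{A}P_{2}^{\bullet},A)$ (its equation (3.2)), justified by the finite generation and freeness over $A$ of the building blocks $V_{I}(A)^{U_{\sigma}}$ and $V'_{J}(A)^{U_{\sigma'}}$, after which both sides are resolutions by coinduced, hence $\Gamma$-acyclic, modules and the two $\Gamma$-cohomologies are computed by the same complex $\text{Hom}_{A[\Gamma]}(P_{1}^{\bullet}\otimes_{A}P_{2}^{\bullet},A)$. (Your worry about infinite direct sums over simplices is in fact also a worry about the paper's (3.2), since the duality isomorphism cited there is stated for finitely generated free modules; any honest proof must decompose into the finitely many $\Gamma$-orbits, or into the finitely many $G$-orbits of simplices, \emph{before} dualizing.) As written, your proposal replaces the one genuinely delicate step of the paper's proof by an unproven claim, so part (2) is not established.
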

 \begin{proof}
 	By the Lemma \ref{lem3.7}, the $ G $-module $ V_{I}(A) $ (resp. $ G' 
 	$-module $ 
 	V'(J)(A) $) has a projective resolution of finitely generated $ A[G] 
 	$-modules 
 	(resp. $ 
 	A[G'] $-modules)
 	\[ P_{1}^{\bullet}\to V_{I}(A):~~ 0\to \oplus_{\tau \in 
 		\mathcal{BT}_{d}}V_{I}(A)^{U_{\tau}}\to \cdots \to 
 	\oplus_{\sigma\in \mathcal{BT}_{0}}V_{I}(A)^{U_{\sigma}}\to V_{I}(A)\to 0. 
 	\]
 	\[ P_{2}^{\bullet}\to V'_{J}(A):~~0\to \oplus_{\tau' \in 
 		\mathcal{BT'}_{d'}}V'_{J}(A)^{U_{\tau'}}\to \cdots \to 
 	\oplus_{\sigma'\in \mathcal{BT'}_{0}}V'_{J}(A)^{U_{\sigma'}}\to 
 	V'_{J}(A)\to 
 	0.  
 	\]
 	Since $ V_{I}(A) $ (resp. $ V'_{J}(A) $) is a free $ A $-module 
 	(\cite{1}, Corrollary 5 in Section 4) we get $ 
 	\text{Hom}_{A}(P_{1}^{\bullet},A) $ (resp. $ \text{Hom}_{A}(P_{2}
 	^{\bullet},A) $) is a resolution of $ \text{Hom}_{A}(V_{I}(A),A) $ 
 	(resp. $ \text{Hom}_{A}
 	(V'_{J}(A),A)  $). Then by the Kunneth formula for 
 	complexes (\cite{12}, Theorem 
 	3.6.3) we see that $ 
 	\text{Hom}_{A}(P_{1}^{\bullet},A)\otimes_{A}\text{Hom}_{A}(P_{2}
 	^{\bullet},A) $ is a resolution of $ 
 	\text{Hom}_{A}(V_{I}(A),A)\otimes_{A}\text{Hom}_{A}
 	(V'_{J}(A),A)  $. Similarly, $ 
 	\text{Hom}_{A}(P_{1}^{\bullet}\otimes_{A}P_{2}^{\bullet},A )$ is a 
 	resolution 
 	of $ 
 	\text{Hom}_{A}(V_{I}(A)\otimes_{A}V'_{J}(A),A) $ since $ 
 	V_{I}(A)\otimes_{A}V'_{J}(A) $ is also a free $ A $-module. 
 	
 	Since $ V_{I}(A) $ and $ V'_{J}(A) $ is a free $ A $-module, the $ A 
 	$-modules 
 	$ V_{I}(A)^{U_{\sigma}} $ and $ V'_{J}(A)^{U_{\sigma'}} $ are finitely 
 	generated and free. It follows that both $ 
 	P_{1}^{\bullet}$ and $ P_{2}^{\bullet}  $ are finitely generated 
 	free $ A $-modules. Then we have (\cite{13}, Corrollary 7 in Section 5, 
 	Chapter 
 	5)
 	\begin{align}
 	\text{Hom}_{A}(P_{1}^{\bullet},A)\otimes_{A}\text{Hom}_{A}(P_{2}
 	^{\bullet},A)\simeq 
 	\text{Hom}_{A}(P_{1}^{\bullet}\otimes_{A}P_{2}^{\bullet},A 
 	) .\label{3.2}
 	\end{align}
 	
 	The group $ G\times G' $ acts on the polysimplicial complexes $ 
 	\mathcal{BT}\times \mathcal{BT'} $. If 
 	$ (v,v') $ is a vertex of $ \mathcal{BT}\times \mathcal{BT'} $, then the 
 	stabilizer $ B_{(v,v')} $ of $ (v,v') $ is just $ B_{v}\times B_{v'} $.
 	Hence if $ \Gamma\subset 
 	G\times G' $ be a discrete torsion-free cocompact subgroup, then $ \Gamma $ 
 	acts freely on $ \mathcal{BT}\times \mathcal{BT'} $ so that $ 
 	P_{1}^{\bullet}\otimes_{A}P_{2}^{\bullet} $ are finitely generated free $ 
 	A[\Gamma] $-modules. It follows that $ 
 	\text{Hom}_{A}(P_{1}^{\bullet}\otimes_{A}P_{2}^{\bullet},A )$ are $ 
 	\Gamma 
 	$-injective resolutions. This implies the first assection. And the 
 	second 
 	assertion follows from (\ref{3.2}).
 	
 	For the third assertion, let $ P_{1}^{\bullet}\to V_{I}(\mathbb{Z}) $ 
 	(resp. $ 
 	P_{2}^{\bullet}\to 
 	V'_{J}(\mathbb{Z}) $) be a projective resolution by finitely generated $ 
 	\mathbb{Z}[G] $ (resp. $ 
 	\mathbb{Z}[G'] $)-modules. We have seen that $ 
 	P_{1}^{\bullet}\otimes_{\mathbb{Z}}P_{2}^{\bullet}\to 
 	V_{I}(\mathbb{Z})\otimes_{\mathbb{Z}}V'_{J}(\mathbb{Z})   $ is a projective 
 	resolution by finitely generated free $ \mathbb{Z}[\Gamma] $-modules. Then 
 	$ 
 	P_{1}^{\bullet}\otimes_{\mathbb{Z}}P_{2}^{\bullet}\otimes_{\mathbb{Z}}A\to 
 	V_{I}(A)\otimes_{\mathbb{Z}}V'_{J}(A) $ is a projective resolution by 
 	finitely 
 	generated free $ A[\Gamma] $-modules since $ 
 	V_{I}(\mathbb{Z})\otimes_{\mathbb{Z}}V'_{J}(\mathbb{Z}) $ is $ \mathbb{Z} 
 	$-free. 
 	
 	Therefore $ 
 	\text{Ext}^{\ast}_{\mathbb{Z}[\Gamma]}(V_{I}(\mathbb{Z})\otimes_{\mathbb{Z}}V'_{J}
 	(\mathbb{Z}),\mathbb{Z}) $, resp. $ 
 	\text{Ext}^{\ast}_{A[\Gamma]}(V_{I}(A)\otimes_{A}V'_{J}
 	(A),A) $, can be computed from the complex $ 
 	\text{Hom}_{\mathbb{Z}[\Gamma]}(P_{1}^{\bullet}\otimes_{\mathbb{Z}}P_{2}^{\bullet},\mathbb{Z})
 	$, resp. $ 
 	\text{Hom}_{A[\Gamma]}(P_{1}^{\bullet}\otimes_{\mathbb{Z}}P_{2}^{\bullet}\otimes_{\mathbb{Z}}A,A)
 	$. The third assertion follows from the universal 
 	coefficient theorem applied to the complex
 	\[ 
 	\text{Hom}_{A[\Gamma]}(P_{1}^{\bullet}\otimes_{\mathbb{Z}}P_{2}^{\bullet}\otimes_{\mathbb{Z}}A,A)
 	=\text{Hom}_{\mathbb{Z}[\Gamma]}(P_{1}^{\bullet}\otimes_{\mathbb{Z}}P_{2}^{\bullet},\mathbb{Z})
 	\otimes_{\mathbb{Z}}A . \]
 \end{proof}
 
 \begin{lem}(\cite{6},Lemma 5.1 and Lemma 5.2)\label{lem3.9}
 	Let $ R $ be a commutative ring and $ \Gamma $ be any group. Let $ V,M $  
 	be 
 	$ R[\Gamma] 
 	$-modules, and assume that $ V $ is free as an $ R $-module. Then
 	\begin{enumerate}
 		\item We have
 		\[ H^{\bullet}(\Gamma,\text{Hom}_{R}(V,M))\simeq 
 		\text{Ext}^{\bullet}_{R[\Gamma]}(V,M). \]
 		\item 	Moreover assume that $ V $ has a resolution $ F^{\bullet}\to V 
 		$ where for 
 		every $ n\geq 0,~F_{n} $ is a finitely generated free $ R[\Gamma] 
 		$-module. 
 		Let $ S $ be a flat $ R $-algebra. Then the canonical map
 		\[ \text{Ext}_{R[\Gamma]}^{\bullet}(V,M)\otimes_{R}S\to 
 		\text{Ext}_{S[\Gamma]}^{\bullet}(V\otimes_{R}S,M\otimes_{R}S) \]
 		is an isomorphism.
 	\end{enumerate}
 \end{lem}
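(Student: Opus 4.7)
The plan is to handle the two parts separately. Part (1) is a derived-functor identity which I would prove using the \emph{diagonal action trick}, and part (2) is a flat base change for Ext computed via a finite free resolution.

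For part (1), my strategy is to produce an explicit $R[\Gamma]$-projective resolution of $V$ from any $R[\Gamma]$-projective resolution $Q^{\bullet}\to R$ of the trivial module $R$ (such a resolution computes $H^{\bullet}(\Gamma,-)$ by definition). The key observation is: for $Q$ a free $R[\Gamma]$-module and $V$ an $R[\Gamma]$-module that is free (hence flat) over $R$, the $R[\Gamma]$-module $Q\otimes_{R}V$ equipped with the diagonal $\Gamma$-action is still free over $R[\Gamma]$, via the explicit change of variables $\gamma\otimes v\mapsto \gamma\otimes \gamma^{-1}v$ which identifies the diagonal action with the action on the first factor only. Since $V$ is $R$-flat, $Q^{\bullet}\otimes_{R}V\to V$ remains exact, and therefore is a free $R[\Gamma]$-resolution of $V$. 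The tensor--hom adjunction
\[ \text{Hom}_{R[\Gamma]}(Q^{n}\otimes_{R}V,M)\;\cong\;\text{Hom}_{R[\Gamma]}(Q^{n},\text{Hom}_{R}(V,M)) \]
then identifies the complex computing $\text{Ext}^{\bullet}_{R[\Gamma]}(V,M)$ with the one computing $\text{Ext}^{\bullet}_{R[\Gamma]}(R,\text{Hom}_{R}(V,M))=H^{\bullet}(\Gamma,\text{Hom}_{R}(V,M))$.

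For part (2), I start with the hypothesized resolution $F^{\bullet}\to V$ by finitely generated free $R[\Gamma]$-modules. Flatness of $S$ over $R$ ensures that $F^{\bullet}\otimes_{R}S\to V\otimes_{R}S$ remains exact, and each $F^{n}\otimes_{R}S$ is a finitely generated free $S[\Gamma]$-module (using $S[\Gamma]\cong R[\Gamma]\otimes_{R}S$). Hence this is a free $S[\Gamma]$-resolution and computes $\text{Ext}^{\bullet}_{S[\Gamma]}(V\otimes_{R}S,M\otimes_{R}S)$ as the cohomology of
\[ \text{Hom}_{S[\Gamma]}(F^{\bullet}\otimes_{R}S,M\otimes_{R}S)\;\cong\;\text{Hom}_{R[\Gamma]}(F^{\bullet},M\otimes_{R}S), \]
by extension-of-scalars adjunction. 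The finite generation of each $F^{n}$ as a free $R[\Gamma]$-module lets me invoke the standard identity $\text{Hom}_{R[\Gamma]}(F^{n},M)\otimes_{R}S\cong \text{Hom}_{R[\Gamma]}(F^{n},M\otimes_{R}S)$, checked first on $F^{n}=R[\Gamma]$ and extended to finite direct sums. A second application of flatness of $S$ lets me commute $\otimes_{R}S$ past taking cohomology, yielding the desired isomorphism with $\text{Ext}^{\bullet}_{R[\Gamma]}(V,M)\otimes_{R}S$.

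The genuine content sits in part (1): one must verify that the diagonal-action construction really produces an $R[\Gamma]$-projective (in fact free) resolution rather than merely an $R$-projective one, and this is exactly where the hypothesis that $V$ be $R$-free is essential. Once this is in place, both statements reduce to formal manipulations of tensor--hom adjunction together with the behavior of Hom and cohomology under the flat base change $S/R$; the finite-generation hypothesis in part (2) is used precisely to let $\otimes_{R}S$ commute with $\text{Hom}_{R[\Gamma]}(F^{\bullet},-)$ termwise.
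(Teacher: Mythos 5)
Your proof is correct, and it is essentially the standard argument: the paper itself offers no proof of this lemma but simply cites Iovita--Spiess (\cite{6}, Lemmas 5.1 and 5.2), whose proofs are precisely the diagonal-action resolution $Q^{\bullet}\otimes_{R}V$ combined with tensor--hom adjunction for part (1), and flat base change along a finitely generated free $R[\Gamma]$-resolution for part (2). Your identification of where each hypothesis ($R$-freeness of $V$, finite generation of the $F_{n}$, flatness of $S$) enters is accurate.
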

 
 Going back to the $ E_{2} $-terms (\ref{3.1}) of our covering spectral 
 sequence .
 Because of
 Proposition 
 \ref{prop3.8} and Lemma \ref{lem3.9} we can rewrite it in the form 
 \begin{align*}
 H^{r}(\Gamma,H_{\dR}^{s}(\mathfrak{X}_{K}\times 
 _{K''}\mathfrak{X}_{K''}))&=\bigoplus_{i+j=s}H^{r}(\Gamma,\text{Hom}_{K''}(V_{i}(K''),K'')\otimes_{K''}\text{Hom}_{K''}
 (V'_{j}(K''),K'') )\\
 &=\bigoplus_{i+j=s}H^{r}(\Gamma,\text{Hom}_{K''}(V_{i}(K'')\otimes_{K''}
 V'_{j}(K''),K'')\\
 &=\bigoplus_{i+j=s}\text{Ext}_{K''[\Gamma]}^{r}(V_{i}(K'')\otimes_{K''}
 V'_{j}(K''),K'')
 \end{align*}
 so that for $ 0\leq s\leq d+d' $ the covering spectral sequence becomes 
 \begin{align}
 E_{2}^{r,s}=\bigoplus_{i+j=s}\text{Ext}_{K''[\Gamma]}^{r}\left(V_{i}(K'')\otimes_{K''}
 V'_{j}(K''),K''\right)\Longrightarrow H^{r+s}(X_{\Gamma}). \label{coverss}
 \end{align}
 
 We want to study the groups 
 $\text{Ext}_{K''[\Gamma]}^{r}(V_{i}(K'')\otimes_{K''}
 V'_{j}(K''),K'')  $ in (\ref{coverss}).
 
 Because of the Proposition \ref{prop3.8} this amounts to 
 the computation of
 \[ \text{Ext}_{\mathbb{C}[\Gamma]}^{\ast}(V_{i}(\mathbb{C})\otimes_{\mathbb{C}}
 V'_{j}(\mathbb{C}),\mathbb{C}). \]
 
 Since $ \Gamma $ is cocompact the $ G\times G' $-representation
 \[ \text{Ind}_{\Gamma}:=C^{\infty}(G\times G'/\Gamma,\mathbb{C}) \]
 is admissible. By Shapiro's lemma we have
 \[ \text{Ext}_{\mathbb{C}[\Gamma]}^{\ast}(V_{i}(\mathbb{C})\otimes_{\mathbb{C}}
 V'_{j}(\mathbb{C}),\mathbb{C})=\text{Ext}^{\ast}_{G\times 
 	G'}(V_{i}(\mathbb{C})\otimes_{\mathbb{C}}
 V'_{j}(\mathbb{C}), \text{Ind}_{\Gamma}) \]
 The representation $ \text{Ind}_{\Gamma} $ is a unitary representations, and 
 decomposes into the direct sum of irreducible smooth unitary 
 representations of $ G\times G' $ with finite multiplicities (see \cite{1}, 
 Section 5).
 
 \begin{prop}(\cite{9}, Lemma 2.1)\label{prop3.10}
 	\begin{enumerate}
 		\item For $ I_{1},I_{2}\subset \{1,\cdots,d\} $ and $ 
 		J_{1},J_{2}\subset 
 		\{1,\cdots,d'\} $, we have
 		\begin{align*}
 		\text{Ext}_{G\times G'}^{i}(V_{I_{1}}(\mathbb{C})\otimes 
 		V'_{J_{1}}(\mathbb{C}),V_{I_{2}}(\mathbb{C})\otimes 
 		V'_{J_{2}}(\mathbb{C} )=
 		\begin{cases}
 		\mathbb{C} & if~i=\Delta(I_{1},I_{2})+\Delta(J_{1},J_{2})\\
 		0&otherwise
 		\end{cases}
 		\end{align*}
 		where $ \Delta(I_{1},I_{2})=|(I_{1}\cup I_{2})|-|(I_{1}\cap I_{2})| $.
 		
 		\item Let $ V\otimes V' $ be an irreducible smooth representation of $ 
 		G\times G' $. If it is not of the form $ V_{I_{0}}(\mathbb{C})\otimes 
 		V'_{J_{0}}(\mathbb{C}) $  with $ I_{0}\subset \{1,\cdots,d\} $ 
 		and $ J_{0}\subset \{1,\cdots,d'\} $, then we have 
 		\[ \text{Ext}_{G\times 
 			G'}^{i}(V_{I}(\mathbb{C})\otimes 
 		V'_{J}(\mathbb{C}),V\otimes V' )=0  \]
 		for every $ I\subset 
 		\{1,\cdots,d\},J\subset \{1,\cdots,d'\} $ and $ i  $.
 	\end{enumerate}
 \end{prop}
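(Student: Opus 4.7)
The plan is to establish a Künneth-type formula for Ext-groups in the smooth representation category of $G \times G'$, and then reduce both assertions to the analogous single-factor statements proven by Schneider and Stuhler in \cite{1}. The key input will be the explicit finitely generated projective resolutions from Lemma \ref{lem3.7}.

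For part (1), I would first invoke Lemma \ref{lem3.7} to produce projective resolutions $P_1^{\bullet} \to V_{I_1}(\mathbb{C})$ by finitely generated $\mathbb{C}[G]$-modules and $P_2^{\bullet} \to V'_{J_1}(\mathbb{C})$ by finitely generated $\mathbb{C}[G']$-modules. Since each term is finitely generated and the representations $V_{I_1}(\mathbb{C})$, $V'_{J_1}(\mathbb{C})$ are $\mathbb{C}$-free, the tensor product $P_1^{\bullet} \otimes_{\mathbb{C}} P_2^{\bullet}$ is a resolution of $V_{I_1}(\mathbb{C}) \otimes_{\mathbb{C}} V'_{J_1}(\mathbb{C})$ by finitely generated modules over $\mathbb{C}[G \times G']$ that are projective in the category of smooth $G \times G'$-representations; this uses the same polysimplicial action of $G \times G'$ on $\mathcal{BT} \times \mathcal{BT}'$ that appeared in the proof of Proposition \ref{prop3.8}. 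Computing Hom into $V_{I_2}(\mathbb{C}) \otimes V'_{J_2}(\mathbb{C})$ and invoking the finite-generation, one obtains a Künneth isomorphism
\[ \mathrm{Hom}_{G \times G'}\bigl(P_1^{\bullet} \otimes P_2^{\bullet}, V_{I_2}(\mathbb{C}) \otimes V'_{J_2}(\mathbb{C})\bigr) \simeq \mathrm{Hom}_{G}(P_1^{\bullet}, V_{I_2}(\mathbb{C})) \otimes_{\mathbb{C}} \mathrm{Hom}_{G'}(P_2^{\bullet}, V'_{J_2}(\mathbb{C})). \]
Taking cohomology and applying the algebraic Künneth formula (\cite{12}, Theorem 3.6.3) then yields
\[ \mathrm{Ext}^{i}_{G \times G'}\bigl(V_{I_1} \otimes V'_{J_1}, V_{I_2} \otimes V'_{J_2}\bigr) \simeq \bigoplus_{p+q=i} \mathrm{Ext}^{p}_{G}(V_{I_1}, V_{I_2}) \otimes_{\mathbb{C}} \mathrm{Ext}^{q}_{G'}(V'_{J_1}, V'_{J_2}). \]
The proposition then follows immediately from the single-factor computation of Schneider--Stuhler, which gives $\mathrm{Ext}^{p}_{G}(V_{I_1}, V_{I_2}) = \mathbb{C}$ if $p = \Delta(I_1, I_2)$ and $0$ otherwise (and similarly for the primed side).

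For part (2), the plan is to invoke the general fact that every irreducible smooth representation of a product $G \times G'$ of totally disconnected locally compact groups is an external tensor product $V \otimes V'$ of irreducible smooth representations of the factors. Once the Künneth isomorphism from part (1) is established (its proof does not use that the target is of the special form $V_{I_2} \otimes V'_{J_2}$, only that $V \otimes V'$ is $\mathbb{C}$-flat), the vanishing reduces to showing that at least one of $\mathrm{Ext}^{p}_{G}(V_{I}(\mathbb{C}), V)$ or $\mathrm{Ext}^{q}_{G'}(V'_{J}(\mathbb{C}), V')$ vanishes for all $p, q$. But since $V \otimes V'$ is assumed not to be of the form $V_{I_0}(\mathbb{C}) \otimes V'_{J_0}(\mathbb{C})$, one of the factors, say $V$, is not isomorphic to any $V_{I_0}(\mathbb{C})$, and the corresponding Schneider--Stuhler vanishing (\cite{1}) gives $\mathrm{Ext}^{\ast}_{G}(V_{I}(\mathbb{C}), V) = 0$.

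The main obstacle is verifying that the Künneth isomorphism at the level of Hom complexes is valid in the smooth representation category and that the tensor product $P_1^{\bullet} \otimes P_2^{\bullet}$ is genuinely projective over $G \times G'$ (not merely over the group algebra). Both issues are handled by exploiting the finite generation provided by Lemma \ref{lem3.7} together with the product structure of stabilizers $B_{(v,v')} = B_v \times B_{v'}$ on the polysimplicial building $\mathcal{BT} \times \mathcal{BT}'$, essentially by the same arguments already used in the proof of Proposition \ref{prop3.8}.
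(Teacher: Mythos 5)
The paper offers no proof of this proposition: it is imported verbatim from Mieda (\cite{9}, Lemma 2.1), so there is no internal argument to compare yours against. Your K\"unneth reduction is nonetheless sound and is essentially the standard way to prove the lemma: tensor the Schneider--Stuhler resolutions of Lemma \ref{lem3.7}, check that the resulting complex is a projective resolution of $V_{I_1}(\mathbb{C})\otimes V'_{J_1}(\mathbb{C})$ in the smooth category of $G\times G'$-representations, identify the Hom-complex with the tensor product of the two single-factor Hom-complexes, and apply the algebraic K\"unneth formula over $\mathbb{C}$ together with the single-factor computation of $\mathrm{Ext}^{\ast}_{G}(V_{I_1}(\mathbb{C}),V_{I_2}(\mathbb{C}))$ from \cite{1}. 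Two points deserve more care than you give them. First, the projectivity of $P_1^{\bullet}\otimes_{\mathbb{C}}P_2^{\bullet}$ over $G\times G'$ is not ``the same argument as in Proposition \ref{prop3.8}'': there the issue is freeness over $A[\Gamma]$, proved via the free action of $\Gamma$ on $\mathcal{BT}\times\mathcal{BT}'$, whereas here you need projectivity in the smooth category, which comes from writing each term as a compact induction from the compact open stabilizer $B_{\sigma}\times B_{\sigma'}$ of a finite-dimensional representation and using semisimplicity of smooth representations of profinite groups in characteristic zero; the identification of the Hom-complex then goes through Frobenius reciprocity plus the finite-dimensionality of $V_{I}(\mathbb{C})^{U_{\sigma}}$, not merely ``finite generation''. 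Second, for part (2) you need the single-factor vanishing $\mathrm{Ext}^{\ast}_{G}(V_{I}(\mathbb{C}),V)=0$ for an irreducible smooth $V$ not isomorphic to any $V_{I_0}(\mathbb{C})$; this is available in \cite{1} in the generality actually used later (the admissible constituents of $\mathrm{Ind}_{\Gamma}$), and your appeal to the factorization of irreducible representations of a product group is unnecessary, since the statement already assumes the form $V\otimes V'$. With these points made precise, your outline amounts to a complete proof of the quoted result.
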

 
 \begin{thm}\label{thm3.11}
 	Let $ \Gamma $ be a discrete torsion-free cocompact subgroup of $ G\times 
 	G' $. 
 	Let $ m_{1,0} $ (resp. $ m_{0,1} $, resp. $ m_{1,1} $) be the multiplicity 
 	of the representation $ 
 	V_{\emptyset}\otimes \mathbb{C} $ (resp. $ \mathbb{C}\otimes 
 	V'_{\emptyset}  $, resp. $ V_{\emptyset}\otimes V'_{\emptyset}$) in the 
 	representation $ \text{Ind}_{\Gamma} $ of $ G\times G' $. 
 	If 
 	$ d,d' $ are all even numbers and $ d\leq d' $, we have
 	\begin{align*}
 	E_{2}^{r,s}&=\bigoplus_{i+j=s}\text{Ext}_{K''[\Gamma]}^{r}\left(V_{i}(K'')\otimes_{K''}
 	V'_{j}(K''),K''\right)= \begin{cases}
 	K'' & r=s<\frac{d}{2}\\
 	K''^{m_{1,0}+1} & r=s,~\frac{d}{2}\leq r<\frac{d'}{2}\\
 	K''^{m_{1,0}+m_{0,1}+1} & r=s\neq \frac{d+d'}{2},r\geq \frac{d'}{2}\\
 	K''^{m_{1,1}+m_{1,0}+m_{0,1}}  & r+s=d+d',r\neq \frac{d+d'}{2}\\
 	K''^{m_{1,1}+m_{1,0}+m_{0,1}+1}& r=s= \frac{d+d'}{2}\\
 	K''^{m_{1,0}}& r+s\in 2\mathbb{Z},d\leq r+s<d',r\neq s,r+s\neq d+d'\\
 	K''^{m_{1,0}+m_{0,1}}& r+s\in 2\mathbb{Z},r+s\geq d',r\neq s,r+s\neq d+d'\\
 	0& otherwise
 	\end{cases}
 	\end{align*}
 \end{thm}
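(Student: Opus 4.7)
The plan is to combine Shapiro's lemma with the explicit Ext formula of Proposition \ref{prop3.10}, and then assemble the table by a case analysis on $(r,s)$.

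First, by Proposition \ref{prop3.8}(3) together with Lemma \ref{lem3.9}(2), the $K''$-dimension of each summand $\text{Ext}^{r}_{K''[\Gamma]}(V_{i}(K'')\otimes V'_{j}(K''),K'')$ agrees with the $\mathbb{C}$-dimension of $\text{Ext}^{r}_{\mathbb{C}[\Gamma]}(V_{i}(\mathbb{C})\otimes V'_{j}(\mathbb{C}),\mathbb{C})$, since $K''$ is a flat $\mathbb{Z}$-algebra of characteristic zero so the Tor term in Proposition \ref{prop3.8}(3) vanishes. Applying Shapiro's lemma (Lemma \ref{lem3.9}(1)) to the cocompact lattice $\Gamma\subset G\times G'$ then identifies this with $\text{Ext}^{r}_{G\times G'}(V_{i}(\mathbb{C})\otimes V'_{j}(\mathbb{C}),\text{Ind}_{\Gamma})$, which is amenable to direct computation via the decomposition of $\text{Ind}_{\Gamma}$ into irreducibles.

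Next I would invoke Proposition \ref{prop3.10}(2) to restrict attention to irreducible summands of $\text{Ind}_{\Gamma}$ of the form $V_{I_{0}}(\mathbb{C})\otimes V'_{J_{0}}(\mathbb{C})$. Among the generalized Steinberg representations $V_{I}$ of $PGL_{d+1}(K)$ only the trivial $V_{\Delta}=\mathbb{C}$ and the Steinberg $V_{\emptyset}$ are unitarizable (a classical fact), and similarly for $G'$; consequently only four irreducibles of this form occur in $\text{Ind}_{\Gamma}$ and contribute to the Ext-groups, with multiplicities $1$, $m_{1,0}$, $m_{0,1}$, and $m_{1,1}$ as in the statement. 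Using $V_{i}=V_{\{1,\dots,d-i\}}$ one checks $\Delta(\{1,\dots,d-i\},\Delta)=i$ and $\Delta(\{1,\dots,d-i\},\emptyset)=d-i$, so Proposition \ref{prop3.10}(1) places the four contributions in degrees $r=i+j=s$ (trivial, along the diagonal), $r=(d-i)+j$ (for $V_{\emptyset}\otimes\mathbb{C}$), $r=i+(d'-j)$ (for $\mathbb{C}\otimes V'_{\emptyset}$), and $r=d+d'-s$ (for $V_{\emptyset}\otimes V'_{\emptyset}$, along the antidiagonal).

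Finally, I would assemble the table by a case analysis on $(r,s)$. The two intermediate systems $r=(d-i)+j$, $i+j=s$ and $r=i+(d'-j)$, $i+j=s$ each have a unique integer solution precisely when $r+s\equiv d\pmod{2}$ (resp.\ $r+s\equiv d'\pmod{2}$) and that solution lies in the valid range $[0,d]\times[0,d']$; since $d$ and $d'$ are both even these parity conditions collapse to the single requirement $r+s\in 2\mathbb{Z}$, which explains why the evenness hypothesis enters. On the diagonal $r=s$ the trivial representation always contributes $1$, $m_{1,0}$ is picked up once $r\geq d/2$ (so that the solution $(i,j)=(d/2,r-d/2)$ is valid), $m_{0,1}$ once $r\geq d'/2$, and the antidiagonal contribution $m_{1,1}$ appears at the unique crossing $r=s=(d+d')/2$. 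On the antidiagonal $r+s=d+d'$ off the diagonal one gets $m_{1,1}$ together with $m_{1,0}+m_{0,1}$ from the two intermediate systems; on the remaining even off-diagonal cells only the intermediate systems contribute, yielding $m_{1,0}$ for $d\leq r+s<d'$ and $m_{1,0}+m_{0,1}$ for $r+s\geq d'$, and all other cells are zero. I expect the main obstacle to be this careful bookkeeping: one must verify that the range and integrality constraints on $(i,j)$, combined with the parity and magnitude constraints on $r+s$, line up precisely with the eight piecewise regions of the stated formula.
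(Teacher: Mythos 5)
Your proposal retraces the paper's own proof essentially step for step: the reduction from $K''$ to $\mathbb{C}$ via Proposition \ref{prop3.8}(3) and Lemma \ref{lem3.9} is carried out in the paper in the discussion immediately preceding the theorem, and the proof itself consists of exactly your remaining ingredients --- Shapiro's lemma, the unitarity argument restricting the relevant constituents of $\mathrm{Ind}_{\Gamma}$ to $\mathbb{C}\otimes\mathbb{C}$, $V_{\emptyset}\otimes\mathbb{C}$, $\mathbb{C}\otimes V'_{\emptyset}$, $V_{\emptyset}\otimes V'_{\emptyset}$ with multiplicities $1,m_{1,0},m_{0,1},m_{1,1}$, and Proposition \ref{prop3.10} placing the four contributions in degrees $i+j$, $d-i+j$, $i+d'-j$, $d+d'-i-j$. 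You actually spell out the concluding case analysis in more detail than the paper, which stops at ``Altogether we obtain the assertion.''

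There is, however, one concrete point where your sketched bookkeeping (and, it should be said, the paper's table) does not close as written. The $E_{2}$-term is a \emph{direct sum over all pairs} $(i,j)$ with $i+j=s$, $0\leq i\leq d$, $0\leq j\leq d'$, and each of the four constituents contributes a copy of $\mathbb{C}$ (with its multiplicity) for \emph{every} such pair satisfying the corresponding degree equation, not once in total. On the diagonal, the trivial representation contributes $\mathbb{C}$ at $r=i+j=s$ for every admissible pair, so $E_{2}^{s,s}$ receives as many copies as there are pairs, not a single copy as your phrase ``the trivial representation always contributes $1$'' suggests; similarly, at $r=s=\tfrac{d+d'}{2}$ every pair with $i+j=s$ satisfies $r=d+d'-i-j$, so the $m_{1,1}$ contribution there is multiplied by the number of pairs, and on the antidiagonal $r+s=d+d'$ the $m_{1,1}$ summand likewise appears once per pair. (Try $d=d'=2$, $(r,s)=(1,1)$: the pairs $(0,1)$ and $(1,0)$ each contribute a $\mathbb{C}$ from the trivial representation, giving $\mathbb{C}^{2+m_{1,0}+m_{0,1}}$ rather than the tabulated $\mathbb{C}^{1+m_{1,0}+m_{0,1}}$.) So either you must explain why only one pair contributes in each case, or the count has to be redone summing over all admissible $(i,j)$ --- this is precisely the ``careful bookkeeping'' obstacle you anticipated, and it is not resolved by the argument as sketched.
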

 \begin{proof}
 	The proof is the same as \cite{1}, Theorem 3 in Section 5.
 	
 	Assume that $ V_{I}(\mathbb{C})\otimes 
 	V'_{J}(\mathbb{C}) $ appears in $ \text{Ind}_{\Gamma} $, then it is unitary 
 	and 
 	thus $  V_{I}(\mathbb{C}) $ and $ V'_{J}(\mathbb{C}) $ are unitary. Hence 
 	we 
 	conclude that $ I $(resp. $ J $) is either $ \emptyset $ or $ 
 	\{1,\cdots,d\} $ 
 	(resp. $ \{1,\cdots,d'\} $). Note that the multiplicity of the trivial 
 	representation $ \mathbb{C}\otimes \mathbb{C} $ in $ \text{Ind}_{\Gamma} $ 
 	equals $ 1 $. As 
 	in 
 	the proof in (\cite{1}, Theorem 3), by Proposition \ref{prop3.10} we have
 	\begin{align*}
 	&\text{Ext}_{\mathbb{C}[\Gamma]}^{\ast}\left(V_{i}(\mathbb{C})\otimes
 	V'_{j}(\mathbb{C}),\mathbb{C}\right)\\
 	=&\text{Ext}^{\ast}_{G\times 
 		G'}\left(V_{i}(\mathbb{C})\otimes
 	V'_{j}(\mathbb{C}), \text{Ind}_{\Gamma}\right)\\
 	=&\text{Ext}^{\ast}_{G\times 
 		G'}\left( V_{i}(\mathbb{C})\otimes
 	V'_{j}(\mathbb{C}),(\mathbb{C}\otimes \mathbb{C})\oplus 
 	(V_{\emptyset}\otimes \mathbb{C})^{m_{1,0}}\oplus 
 	(\mathbb{C}\otimes V'_{\emptyset})^{m_{0,1}}\oplus (V_{\emptyset}\otimes 
 	V'_{\emptyset})^{m_{1,1}}\right) 
 	\\
 	=&\text{Ext}^{\ast}_{G\times 
 		G'}\left( V_{i}(\mathbb{C})\otimes
 	V'_{j}(\mathbb{C}), \mathbb{C}\otimes \mathbb{C}\right) \oplus  
 	\text{Ext}^{\ast}_{G\times 
 		G'}\left(V_{i}(\mathbb{C})\otimes
 	V'_{j}(\mathbb{C}), V_{\emptyset}(\mathbb{C})\otimes 
 	\mathbb{C}\right)^{m_{1,0}}\\
 	&\qquad \oplus 
 	\text{Ext}^{\ast}_{G\times 
 		G'}\left(V_{i}(\mathbb{C})\otimes
 	V'_{j}(\mathbb{C}),\mathbb{C}\otimes 
 	V'_{\emptyset}(\mathbb{C})\right)^{m_{0,1}} 
 	\oplus 
 	\text{Ext}^{\ast}_{G\times 
 		G'}\left(V_{i}(\mathbb{C})\otimes
 	V'_{j}(\mathbb{C}),V_{\emptyset}(\mathbb{C})\otimes 
 	V'_{\emptyset}(\mathbb{C})\right)^{m_{1,1}}.
 	\end{align*}
 	and
 	\begin{align*}
 	\text{Ext}^{r}_{G\times 
 		G'}\left(V_{i}(\mathbb{C})\otimes
 	V'_{j}(\mathbb{C}), \mathbb{C}\otimes \mathbb{C}\right)=\begin{cases}
 	\mathbb{C} &r=i+j\\
 	0&otherwise
 	\end{cases}
 	\end{align*}
 	
 	\begin{align*}
 	\text{Ext}^{r}_{G\times 
 		G'}\left(V_{i}(\mathbb{C})\otimes
 	V'_{j}(\mathbb{C}), V_{\emptyset}(\mathbb{C})\otimes 
 	\mathbb{C}\right)=\begin{cases}
 	\mathbb{C} &r=d-i+j\\
 	0&otherwise
 	\end{cases}
 	\end{align*}
 	
 	\begin{align*}
 	\text{Ext}^{r}_{G\times 
 		G'}\left(V_{i}(\mathbb{C})\otimes
 	V'_{j}(\mathbb{C}), \mathbb{C}\otimes 
 	V'_{\emptyset}(\mathbb{C})\right)=\begin{cases}
 	\mathbb{C} &r=i+d'-j\\
 	0&otherwise
 	\end{cases}
 	\end{align*}
 	
 	\begin{align*}
 	\text{Ext}^{r}_{G\times 
 		G'}\left(V_{i}(\mathbb{C})\otimes
 	V'_{j}(\mathbb{C}), V_{\emptyset}(\mathbb{C})\otimes 
 	V'_{\emptyset}(\mathbb{C})\right)=\begin{cases}
 	\mathbb{C} &r=d+d'-i-j\\
 	0&otherwise
 	\end{cases}
 	\end{align*}
 	
 	As a result, if $ d,d' $ are all even numbers we see that in our 
 	spectral sequence 
 	nonvanishing $ 
 	E_{2} $-terms only occur on the lines $ r=s $, $ r+s=d+d' $ and those 
 	pairs 
 	$ (r,s) $ satisfying  $  
 	r+s\in 
 	2\mathbb{Z},r+s\geq d $ or $ r+s\in 2\mathbb{Z},r+s\geq d' $. Altogether 
 	we obtain the assertion. 
 	
 \end{proof}
 
 \begin{cor}
 	If 
 	$ d,d' $ are all even numbers and $ d\leq d' $, the covering spectral 
 	sequence 
 	for $ 
 	H_{dR}^{n}(X_{\Gamma}) $ degenerates at 
 	$ E_{2} $. Thus we have
 	\begin{align*}
 	H_{\dR}^{n}(X_{\Gamma})=\begin{cases}
 	K''& 0\leq n< d,~n~\text{even}\\
 	K''^{(n+1)m_{1,0}+1}& d\leq n< d',~n~\text{even}\\
 		K''^{(n+1)(m_{1,0}+m_{0,1})+1}& d'\leq n<d+d',~n~\text{even}\\
 	K''^{(d+d'+1)(m_{1,1}+m_{1,0}+m_{0,1})+1} &n=d+d'\\
 	K''^{(2d+2d'+1-n)(m_{1,0}+m_{0,1})+1} &d+d'<n\leq 2(d+d'),~n~\text{even}\\
 	0&otherwise
 	\end{cases}
 	\end{align*}
 \end{cor}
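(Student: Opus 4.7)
The strategy is to extract both the $E_2$-degeneration and the dimension formula directly from Theorem \ref{thm3.11} via a parity observation. Going through the seven non-zero cases of Theorem \ref{thm3.11}, one checks that every non-vanishing $E_2^{r,s}$-term has $r+s$ even: the cases $r=s$ give total degree $2r$; the case $r+s=d+d'$ is even because $d$ and $d'$ are both even by hypothesis; and the remaining two cases explicitly impose $r+s\in 2\mathbb{Z}$. Since the spectral sequence differentials $d_k\colon E_k^{r,s}\to E_k^{r+k,s-k+1}$ raise total degree by $+1$, they flip the parity of $r+s$. Hence all $d_k$ for $k\geq 2$ must vanish and the covering spectral sequence \eqref{coverss} degenerates at $E_2$. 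Because we work over the field $K''$, the filtration on $H^n_{\dR}(X_\Gamma)$ splits and we obtain $H^n_{\dR}(X_\Gamma)=\bigoplus_{r+s=n}E_2^{r,s}$.

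It then remains to sum along each anti-diagonal $r+s=n$. Odd $n$ yields zero automatically, so assume $n$ is even. The relevant pairs $(r,s)$ satisfy $r+s=n$ with $0\leq s\leq d+d'$ (by Proposition \ref{prop2.2}) and $0\leq r\leq d+d'$ (because $V_i(K'')\otimes_{K''}V'_j(K'')$ admits a projective $K''[\Gamma]$-resolution of length $d+d'$ by Proposition \ref{prop3.8}(1) together with Lemma \ref{lem3.7}). In each of the ranges $0\leq n<d$, $d\leq n<d'$, $d'\leq n<d+d'$, $n=d+d'$, and $d+d'<n\leq 2(d+d')$, the diagonal term $r=s=n/2$ contributes the corresponding diagonal case of Theorem \ref{thm3.11}, while each off-diagonal pair contributes the appropriate off-diagonal case. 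Multiplying each case's dimension by the number of pairs of that type and summing reproduces the displayed formula.

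The only genuine difficulty is bookkeeping in the top range $d+d'<n\leq 2(d+d')$: here the admissibility bounds $r,s\leq d+d'$ truncate the anti-diagonal, so the number of off-diagonal pairs is $2(d+d')-n$ instead of the naive $n$, and tracking this truncation correctly is exactly what produces the exponent $(2d+2d'+1-n)(m_{1,0}+m_{0,1})+1$ appearing in the statement. In the middle ranges the count of off-diagonal pairs is simply $n$, giving the exponents $(n+1)m_{1,0}+1$, $(n+1)(m_{1,0}+m_{0,1})+1$, and so on; the extra ``$+1$'' in every even-degree formula is the contribution of the diagonal case at $r=s=n/2$.
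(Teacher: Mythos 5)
Your argument is correct and matches the paper's: the degeneration is obtained exactly as in the paper by observing that every nonvanishing $E_2^{r,s}$-term from Theorem \ref{thm3.11} has $r+s$ even while the differentials shift total degree by one, and the dimension formula then follows by summing the $E_2$-terms along each anti-diagonal $r+s=n$, with the truncation by the bounds $r,s\leq d+d'$ accounting for the top range just as you describe. The paper leaves the anti-diagonal bookkeeping implicit, but your count reproduces its stated formula.
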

 \begin{proof}
 	In the proof of Theorem \ref{thm3.11} we have seen that in our 
 	spectral sequence 
 	nonvanishing $ 
 	E_{2}$-terms only occur on the lines $ r=s $, $ r+s=d+d' $ and those 
 	pairs 
 	$ (r,s) $ satisfying  $  
 	r+s\in 
 	2\mathbb{Z},r+s\geq d $ or $ r+s\in 2\mathbb{Z},r+s\geq d' $. Then all 
 	differentials in the spectral sequence must be zero. Thus our 
 	covering spectral seuqence degenerates at $ E_{2} $.
 \end{proof}
 
 \begin{cor}\label{cor3.13}
 	If the above covering spectral sequence degenerates at $ E_{2} $, then
     we have
 	\begin{align}
 	\text{dim}(F_{\Gamma}^{i})+\text{dim}(F_{\Gamma}^{n+1-i})=\text{dim}(H_{\dR}
 	^{n}(X_{\Gamma}))
 	\end{align}
 	for any $ 0\leq i\leq n+1 $.
 \end{cor}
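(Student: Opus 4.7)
The plan is to exploit that, under $E_2$-degeneration, the covering filtration is read off from the $E_2$-page, and then to combine this with a symmetry $\dim_{K''} E_2^{r,s} = \dim_{K''} E_2^{s,r}$ established by inspecting Theorem \ref{thm3.11}. First, $E_2$-degeneration identifies the associated graded of the covering filtration with the $E_2$-page, so that $\mathrm{gr}^r F_\Gamma^\bullet H_\dR^n(X_\Gamma) \simeq E_\infty^{r,n-r} = E_2^{r,n-r}$. Hence
\[\dim_{K''} F_\Gamma^i H_\dR^n(X_\Gamma) = \sum_{r \geq i} \dim_{K''} E_2^{r,n-r},\]
and in particular $\dim_{K''} H_\dR^n(X_\Gamma) = \sum_{r = 0}^{n} \dim_{K''} E_2^{r,n-r}$.

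Second, I would check the symmetry $\dim_{K''} E_2^{r,s} = \dim_{K''} E_2^{s,r}$ by going through the case distinctions in Theorem \ref{thm3.11}. Each nonzero entry is determined by conditions that are either about the diagonal $r = s$ or about the sum $r + s$ together with membership in ranges such as ``$r + s \in 2\mathbb{Z}$'', ``$d \leq r + s < d'$'' or ``$r + s = d + d'$''; every such condition is manifestly invariant under swapping $r$ and $s$, so the desired symmetry holds.

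Finally, I combine the two ingredients. Reindexing $r \mapsto n - r$ in the sum defining $\dim F_\Gamma^{n+1-i}$ and applying the symmetry gives
\[\dim_{K''} F_\Gamma^{n+1-i} = \sum_{r \geq n+1-i} \dim_{K''} E_2^{r,n-r} = \sum_{r' \leq i-1} \dim_{K''} E_2^{n-r',r'} = \sum_{r' < i} \dim_{K''} E_2^{r',n-r'}.\]
Adding this to $\dim_{K''} F_\Gamma^i = \sum_{r \geq i} \dim_{K''} E_2^{r,n-r}$ recovers the full sum $\sum_{r = 0}^{n} \dim_{K''} E_2^{r,n-r} = \dim_{K''} H_\dR^n(X_\Gamma)$, which is the claimed identity. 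The only non-formal step is the symmetry check, which is a routine bookkeeping verification from Theorem \ref{thm3.11} rather than a genuine obstacle; everything else is a formal consequence of $E_2$-degeneration and reindexing.
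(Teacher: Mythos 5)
The paper states Corollary \ref{cor3.13} without any proof, so there is nothing to compare against directly; your argument is certainly the intended one. Under $E_2$-degeneration one has $\dim F_\Gamma^i=\sum_{r\ge i}\dim E_2^{r,n-r}$, and the claimed identity is equivalent to the symmetry $\dim E_2^{r,s}=\dim E_2^{s,r}$; your first and third steps are correct and purely formal. The one genuine gap is in how you justify the symmetry. Theorem \ref{thm3.11} and its case table are established only under the hypothesis that $d$ and $d'$ are both even (and $d\le d'$), whereas Corollary \ref{cor3.13} is stated for any situation in which the covering spectral sequence degenerates, and it is invoked in the proof of Theorem \ref{thm1.1} with no parity restriction --- indeed the sentence immediately following the corollary announces that degeneration will be proved ``even if $d,d'$ are not even''. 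Reading the symmetry off the table therefore only proves the corollary in the even case.

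To close the gap, derive the symmetry from the four Ext-group computations inside the proof of Theorem \ref{thm3.11}, which hold for arbitrary $d,d'$. Writing $E_2^{r,s}=\bigoplus_{i+j=s}\text{Ext}^r_{G\times G'}(V_i(\mathbb{C})\otimes V'_j(\mathbb{C}),\text{Ind}_\Gamma)$ and using that only the constituents $\mathbb{C}\otimes\mathbb{C}$, $V_\emptyset\otimes\mathbb{C}$, $\mathbb{C}\otimes V'_\emptyset$ and $V_\emptyset\otimes V'_\emptyset$ contribute, one finds: the trivial constituent contributes only on $r=s$, with multiplicity $N(s):=\#\{(i,j)\,:\,i+j=s,\ 0\le i\le d,\ 0\le j\le d'\}$; the constituent $V_\emptyset\otimes V'_\emptyset$ contributes only on $r+s=d+d'$ with multiplicity $m_{1,1}N(s)$, and $N(s)=N(d+d'-s)$ via the bijection $(i,j)\mapsto(d-i,d'-j)$; the constituent $V_\emptyset\otimes\mathbb{C}$ contributes $m_{1,0}$ exactly when the system $i+j=s$, $d-i+j=r$ has a solution in range, i.e.\ when $r-s\equiv d\pmod 2$, $|r-s|\le d$ and $d\le r+s\le d+2d'$, a condition manifestly symmetric in $r$ and $s$; and analogously for $\mathbb{C}\otimes V'_\emptyset$. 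Hence $\dim E_2^{r,s}=\dim E_2^{s,r}$ without any parity assumption, and your reindexing argument goes through. Arguing from the Ext formulas rather than from the final table is also the safer route: the diagonal entries of the table do not visibly account for all $N(s)$ pairs $(i,j)$ with $i+j=s$, but whatever the correct multiplicities are, the symmetry just described is all the corollary actually needs.
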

 
 We will prove 
 our covering spectral sequence always degenerates at $ E_{2} $ even if $ d,d' 
 $ 
 are 
 not even in next section.
 
 \section{ Hodge-type decomposition for $ H^{n}_{\dR}(X_{\Gamma}) $}
 Let $ \Gamma$ be a discrete torsion-free cocompact 
 subgroup of $ G\times G' $, we have seen that the quotient 
 $X_{\Gamma}:=\Gamma \backslash 
 (\mathfrak{X}_{K}\times_{K''}\mathfrak{X}_{K'})$ becomes a smooth 
 rigid 
 variety 
 over $ K'' $ and we have also proved the existence of covering 
 spectral sequence for $ H_{\dR}^{n}(X_{\Gamma}) $. The associated 
 filtration is denoted by $ F_{\Gamma}^{\bullet} $. 
 
 There is a second natural filtration on $ H^{n}_{\dR}(X_{\Gamma}) $, the 
 Hodge 
 filtration $ F_{\dR}^{\bullet} $. It is induced by the Hodge-de Rham 
 spectral 
 sequence
 \[ E_{1}^{r,s}=H^{s}(X_{\Gamma},\Omega^{r}_{X_{\Gamma}/K''})\Longrightarrow 
 H_{\dR}^{n}(X_{\Gamma}). \]
 By (\cite{3}, Appendix), the 
 analytic 
 variety $ X_{\Gamma} $ is algebraizable to a projective variety over $ K'' 
 $. The GAGA-principle then implies the de Rham cohomology of $ X_{\Gamma} $ 
 are equal to the corresponding algebraic cohomology groups. Since $ K $ 
 has characteristic $ 0 $ we see that the Hodge de Rham spectral sequence 
 for $ 
 X_{\Gamma} $ degenerates. 
 
 One says that the filtrations $ F_{\dR}^{\bullet} $ and $ 
 F_{\Gamma}^{\bullet} $ are \textit{opposite} if for every $ 0\leq p\leq n+1 
 $,
 \[ H^{n}=F_{\dR}^{p}H^{n}\oplus F_{\Gamma}^{n+1-p}H^{n}. \]
 Setting, for $ p+q=n $,
 \[ H^{p,q}:=F^{p}_{\dR}H^{n}\cap F_{\Gamma}^{q}H^{n}. \]
 If the two filtrations are opposite, then we have a Hodge-type 
 decomposition for $ X_{\Gamma} $
 \[ H^{n}_{\dR}(X_{\Gamma})=\bigoplus_{p+q=n}H^{p,q}. \]
 
 Let $ (K^{r,s},d',d'') $ be a double complexes of $ K $-vector spaces 
 vanishing for $ r<0 $ or $ s<0 $ and let $ 'F^{\bullet}H^{n} 
 $, resp. $ ''F^{\bullet}H^{n} $, denote the first filtration induced by 
 the first spectral sequence, resp. the 
 second filtration induced by the second spectral sequence, of $ H^{n} $.
 Let us give a general criterion for the two filtrations $ 
 'F^{\bullet}H^{n},''F^{\bullet}H^{n} $ to be opposite. (See \cite{5}, Section 
 3)
 
 \begin{lem}\label{lem4.1}
 	If  $ 
 	\varphi:\widetilde{K}^{r,s}\to K^{r,s} $ be 
 	a map of double 
 	complexes and assume that for $  \widetilde{K}^{r,s} $, we have $ 
 	\tilde{d}'=0 
 	$ throughout. Assume that the maps $ 
 	''E_{2}\varphi:''\tilde{E}_{2}^{r,s}\to 
 	''E_{2}^{r,s} $ are isomorphisms. 
 	\begin{enumerate}
 		\item (\cite{5}, Lemma 3.2)\label{lem4.2} 	Let $  \tilde{H}^{n},H^{n} 
 		$ be 
 		the 
 		homologies of the total complexes. Then the induced map $ 
 		H\varphi:\tilde{H}^{n}\to H^{n} $ is an isomorphism for every $ n 
 		$, 
 		under with $ ''\tilde{F}^{p}\tilde{H}^{n} $ and $  ''F^{p}H^{n} $ 
 		agree, 
 		and the spectral sequence $ ''E_{2}^{r,s}\Longrightarrow H^{n} $ 
 		degenerates at $ ''E_{2} $. 
 		\item We have $ 
 		H^{n}='F^{i}+''F^{n+1-i} $ for any $ 
 		0\leq i\leq n+1 $.
 		\item Furthermore, if for any $ 0\leq i\leq n+1 $ one has $ 
 		\text{dim}('F^{i})+\text{dim}('F^{n+1-i})=\text{dim}(H^{n}) $ and $ 
 		\text{dim}(''F^{i})+\text{dim}(''F^{n+1-i})=\text{dim}(H^{n}) $. 
 		Then the filtrations $ 'F^{\bullet} $ and $ ''F^{\bullet} $ on $ 
 		H^{n} $ are opposite to each other.
 	\end{enumerate}
 \end{lem}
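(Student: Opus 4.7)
Part (1) is exactly Lemma 3.2 of \cite{5}, so I would simply invoke it: under the stated hypotheses $H\varphi$ is an isomorphism, the second filtrations $''\tilde F^p\tilde H^n$ and $''F^pH^n$ correspond under $H\varphi$, and the second spectral sequence $''E_2^{r,s}\Longrightarrow H^n$ degenerates at $''E_2$.

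For part (2), the key observation is that the hypothesis $\tilde d'=0$ forces the total differential on $\widetilde K^{\bullet,\bullet}$ to reduce to $\pm\tilde d''$, so the total complex splits as a direct sum of the row complexes $(\tilde K^{r,\bullet},\tilde d'')$. Consequently $\tilde H^n=\bigoplus_{r+s=n}H^s(\tilde K^{r,\bullet})$, and tracing through the definitions of the two filtrations yields
\[
'\tilde F^i\tilde H^n=\bigoplus_{r\geq i}H^{n-r}(\tilde K^{r,\bullet}),\qquad
''\tilde F^{n+1-i}\tilde H^n=\bigoplus_{r\leq i-1}H^{n-r}(\tilde K^{r,\bullet}),
\]
so that $\tilde H^n={}'\tilde F^i\tilde H^n\oplus{}''\tilde F^{n+1-i}\tilde H^n$. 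Since $\varphi$ is a morphism of double complexes, it respects both filtrations; hence $H\varphi$ sends $'\tilde F^i$ into $'F^i$ and, by part (1), carries $''\tilde F^{n+1-i}$ isomorphically onto $''F^{n+1-i}$. Because $H\varphi$ is surjective, applying it to the above direct-sum decomposition gives $H^n\subset{}'F^iH^n+{}''F^{n+1-i}H^n$, which is the required sum formula.

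For part (3), set $\delta_i=\dim\bigl({}'F^iH^n\cap{}''F^{n+1-i}H^n\bigr)\geq 0$. The sum formula from (2) translates to
\[
\dim({}'F^iH^n)+\dim({}''F^{n+1-i}H^n)=\dim(H^n)+\delta_i.
\]
Summing this identity over $i=0,1,\ldots,n+1$ and invoking the two self-opposite dimension hypotheses, each of which implies $\sum_{i=0}^{n+1}\dim({}'F^iH^n)=\sum_{i=0}^{n+1}\dim({}''F^iH^n)=\tfrac{n+2}{2}\dim(H^n)$, we obtain $\sum_i\delta_i=0$. Nonnegativity then forces $\delta_i=0$ for every $i$, i.e.\ the filtrations $'F^\bullet$ and $''F^\bullet$ on $H^n$ are opposite. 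The only subtlety I expect is in part (2): $H\varphi$ is \emph{a priori} only guaranteed to respect the first filtration up to containment rather than equality, but this is enough since $H\varphi$ is surjective, so one gets the sum statement without needing an exact match of $'F^\bullet$.
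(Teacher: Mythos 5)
Your proposal is correct. Note first that the paper itself only proves part (3) --- it opens with ``It suffices to show the third assertion,'' taking (1) and (2) from \cite{5} --- so your argument for (2), via the splitting $\tilde H^{n}=\bigoplus_{r+s=n}H^{s}(\tilde K^{r,\bullet})$ forced by $\tilde d'=0$, the resulting decomposition $\tilde H^{n}={}'\tilde F^{i}\oplus{}''\tilde F^{n+1-i}$, and the surjectivity of $H\varphi$ (which, as you correctly note, only needs to respect the filtrations up to containment), supplies detail the paper omits; this is the standard argument of \cite{5} and \cite{6} and is sound. For part (3) the two arguments differ only cosmetically: the paper deduces $\dim('F^{i})\geq\dim(H^{n})-\dim(''F^{n+1-i})=\dim(''F^{i})$ from part (2) and the second dimension identity, obtains the reverse inequality symmetrically, concludes $\dim('F^{i})=\dim(''F^{i})$, and hence that the sum in (2) is direct because the dimensions add up exactly; you instead introduce the defects $\delta_{i}=\dim('F^{i}\cap{}''F^{n+1-i})$ and kill them all at once by summing over $i$ and using that each self-duality hypothesis forces $\sum_{i}\dim('F^{i})=\sum_{i}\dim(''F^{i})=\tfrac{n+2}{2}\dim(H^{n})$. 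Both are elementary dimension counts resting on exactly the same inputs; the paper's version is marginally more direct and records the equality $\dim('F^{i})=\dim(''F^{i})$ along the way, while yours trades pointwise inequalities for a global count. There is no gap in either part.
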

 \begin{proof}
  It suffices to show the third assertion. Since
 	\[ \text{dim}('F^{i})\geq 
 	\text{dim}(H^{n})-\text{dim}(''F^{n+1-i})=\text{dim}(''F^{i}) 
 	\]
 	and 
 	\[ \text{dim}(''F^{i})\geq 
 	\text{dim}(H^{n})-\text{dim}('F^{n+1-i})=\text{dim}('F^{i}) 
 	\]
 	we get $ 
 	\text{dim}('F^{i})=\text{dim}(''F^{i}) $ for any $ 0\leq i\leq 
 	n+1 $. Therefore we have $  H^{n}='F^{i}\oplus ''F^{n+1-i} $.
 \end{proof}
 
 We want to prove that the
 covering filtration $ F_{\Gamma}^{\bullet} $ is 
 opposite to the Hodge filtration $ F_{\dR}^{\bullet} $ 
 on $ 
 H^{n}_{\dR}(X_{\Gamma}) $. 
 
 First we recall the results of Iovita and Spiess \cite{6}. We denote the space 
 of \textit{bounded logarithmic} differential $ p $-forms 
 on $ 
 \mathfrak{X}_{K} $ by $ 
 \Omega_{log,b}^{p}(\mathfrak{X}_{K})$ and such forms are closed forms.  
 The main 
 theorem in \cite{6} 
 states 
 that the composition map
 \[ \Omega_{log,b}^{p}(\mathfrak{X}_{K}) \hookrightarrow  
 \Omega^{p}(\mathfrak{X}_{K})\to H^{p}(\mathfrak{X}_{K}),\qquad w\mapsto [w] 
 \]    
 is injective and its image corresponds to the inclusion
 \[ 
 \text{Hom}_{\mathbb{Z}}(V_{p}(\mathbb{Z}),\mathcal{O}_{K})\otimes_{
 	\mathcal{O}_{K}}K\to
 \text{Hom}_{\mathbb{Z}}(V_{p}(\mathbb{Z}),K) \]
 
 The following proposition is the key ingredient in our proof for Hodge-type 
 decomposition (compare Proposition 5.3 in \cite{6}).
 
 \begin{prop}\label{prop4.1}
 	Let $ p_{1},p_{2} $ be positive integers. Then the canonical map 
 	\[ 
 	(\Omega_{log,b}^{p_{1}}(\mathfrak{X}_{K})\otimes_{K}K'')\otimes_{K''}(\Omega_{log,b}^{
 		p_{2}}(
 	\mathfrak{X}_{K'})\otimes_{K'}K'')
 	\to 
 	H^{p_{1}}_{\dR}(\mathfrak{X}_{K}\otimes_{K}K'')\otimes_{K''} 
 	H^{p_{2}}_{\dR}(\mathfrak{X}_{K'}\otimes_{K'}K'')  \]
 	induce an isomorphism on cohomology groups
 	\[ H^{\ast}(\Gamma,  
 	(\Omega_{log,b}^{p_{1}}(\mathfrak{X}_{K})\otimes_{K}K'')\otimes_{K''}(\Omega_{log,b}^{
 		p_{2}}(
 	\mathfrak{X}_{K'})\otimes_{K'}K''))\to H^{\ast}(\Gamma,  
 	H^{p_{1}}_{\dR}(\mathfrak{X}_{K}\otimes_{K}K'')\otimes_{K''} 
 	H^{p_{2}}_{\dR}(\mathfrak{X}_{K'}\otimes_{K'}K'') ). \]
 \end{prop}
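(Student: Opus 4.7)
The plan is to adapt the strategy of \cite{6}, Proposition 5.3, from the single Drinfeld symmetric space to the product setting, with the tensor-product projective resolution of Proposition \ref{prop3.8}(1) replacing the single-factor resolution of Lemma \ref{lem3.7}. First, write the map in the statement as $\alpha_{1}\otimes_{K''}\alpha_{2}$, where $\alpha_{i}\colon X_{i}:=\Omega_{log,b}^{p_{i}}(\mathfrak{X}_{K_{i}})\otimes_{K_{i}} K''\hookrightarrow Y_{i}:=H_{\dR}^{p_{i}}(\mathfrak{X}_{K_{i}}\otimes_{K_{i}}K'')$ is the injection coming from the main theorem of \cite{6}. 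Since we work over the field $K''$, the tensor product $\alpha_{1}\otimes\alpha_{2}$ remains injective; setting $Z_{i}:=Y_{i}/X_{i}$, its cokernel $C$ sits in a $\Gamma$-equivariant short exact sequence
\[
0\longrightarrow X_{1}\otimes_{K''}Z_{2}\longrightarrow C\longrightarrow Z_{1}\otimes_{K''}Y_{2}\longrightarrow 0,
\]
so it is enough to show $H^{\ast}(\Gamma,\,X_{1}\otimes_{K''}Z_{2})=0$ and $H^{\ast}(\Gamma,\,Z_{1}\otimes_{K''}Y_{2})=0$.

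To prove these vanishings, I would re-express each side as the cohomology of a complex built from the tensor-product resolution. Concretely, $P_{1}^{\bullet}\otimes_{\mathbb{Z}}P_{2}^{\bullet}$, after base change to $K''$, is a projective resolution of $V_{p_{1}}(K'')\otimes_{K''}V'_{p_{2}}(K'')$ by finitely generated free $K''[\Gamma]$-modules (Proposition \ref{prop3.8}(1)). Combining Lemma \ref{lem3.9}(1) with the identifications $X_{i}=\text{Hom}_{\mathbb{Z}}(V_{p_{i}}(\mathbb{Z}),\mathcal{O}_{K_{i}})\otimes_{\mathcal{O}_{K_{i}}}K''$ (from the main theorem of \cite{6}) and $Y_{i}=\text{Hom}_{K''}(V_{p_{i}}(K''),K'')$ (from Lemma \ref{lem2.1}), each of the four tensor products $X_{1}\otimes X_{2}$, $X_{1}\otimes Y_{2}$, $Y_{1}\otimes X_{2}$, $Y_{1}\otimes Y_{2}$ becomes a $\mathrm{Hom}$-module of $V_{p_{1}}\otimes V'_{p_{2}}$ with appropriate coefficients, and hence its $\Gamma$-cohomology is computed by a natural double complex. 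Filtering by the factor in which the quotient $Z_{i}$ appears, the associated spectral sequence has an $E_{2}$-page expressible in terms of the $\Gamma$-cohomology of $Z_{i}$ against the pieces of the individual factor resolution $P_{i}^{\bullet}$; these terms vanish by the Iovita-Spiess main theorem of \cite{6} (reformulated as an $\text{Ext}$-vanishing via Lemma \ref{lem3.9}(1)).

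The main technical obstacle I anticipate lies in this last step. Because $\Gamma\subset G\times G'$ need not split as a product $\Gamma_{1}\times\Gamma_{2}$, no direct K\"unneth decomposition of $H^{\ast}(\Gamma,-)$ is available, and one cannot simply quote the single-factor Iovita-Spiess result on two separate factors. The workaround is to argue entirely through the tensor-product resolution---which is projective over $K''[\Gamma]$ by Proposition \ref{prop3.8}(1), thanks to the free action of $\Gamma$ on the polysimplicial complex $\mathcal{BT}\times\mathcal{BT}'$---and to extract the required factor-wise vanishing from its structure, so that the input from \cite{6} is used only on the local pieces $V_{p_{i}}(K'')^{U_{\tau}}$ that appear in each resolution $P_{i}^{\bullet}$, not on $\Gamma$ itself.
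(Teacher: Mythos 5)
Your reduction of the statement to the vanishing of $H^{\ast}(\Gamma, X_{1}\otimes_{K''}Z_{2})$ and $H^{\ast}(\Gamma, Z_{1}\otimes_{K''}Y_{2})$ is legitimate, but the argument you sketch for these vanishings does not go through, and you have in effect flagged the obstruction yourself without removing it. First, the modules $Z_{1}\otimes Y_{2}$ and $X_{1}\otimes Z_{2}$ are not of the form $\mathrm{Hom}(V_{p_{1}}\otimes V'_{p_{2}},M)$ for any coefficient module $M$ (indeed, even $X_{1}\otimes X_{2}$ and $Y_{1}\otimes Y_{2}$ only admit natural maps to such $\mathrm{Hom}$-modules, which is exactly what Proposition \ref{prop3.8}(2) is needed for), so Lemma \ref{lem3.9}(1) does not convert their $\Gamma$-cohomology into an $\mathrm{Ext}$-group over the tensor-product resolution. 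Second, the $E_{2}$-page you describe --- ``the $\Gamma$-cohomology of $Z_{i}$ against the pieces of the individual factor resolution $P_{i}^{\bullet}$'' --- is not a well-defined object: $\Gamma$ acts on $P_{1}^{\bullet}\otimes P_{2}^{\bullet}$ only through its embedding in $G\times G'$ and does not act discretely or freely on a single factor, so no filtration of the double complex produces single-factor $\Gamma$-cohomology. Finally, the main theorem of \cite{6} is not a vanishing statement; it identifies $\Omega^{p}_{log,b}(\mathfrak{X}_{K})$ with the lattice-dual $\mathrm{Hom}_{\mathbb{Z}}(V_{p}(\mathbb{Z}),\mathcal{O}_{K})\otimes_{\mathcal{O}_{K}}K$ inside $\mathrm{Hom}_{\mathbb{Z}}(V_{p}(\mathbb{Z}),K)$, and the cohomological comparison of \cite{6} is proved only for cocompact $\Gamma\subset G$, so it cannot be quoted factor-wise here.

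The missing idea is to work integrally over $\mathcal{O}_{K''}$ on \emph{both} factors simultaneously rather than to compare the factors separately. Via the Iovita--Spiess identification, $X_{1}\otimes_{K''}X_{2}$ maps naturally to $\mathrm{Hom}_{\mathcal{O}_{K''}}(V_{p_{1}}(\mathcal{O}_{K''})\otimes V'_{p_{2}}(\mathcal{O}_{K''}),\mathcal{O}_{K''})\otimes_{\mathcal{O}_{K''}}K''$, and Proposition \ref{prop3.8}(2) applied with $A=\mathcal{O}_{K''}$ shows this map induces an isomorphism on $\Gamma$-cohomology; then Lemma \ref{lem3.9}(1) together with the flat base change of Lemma \ref{lem3.9}(2) along $\mathcal{O}_{K''}\to K''$ --- available because Proposition \ref{prop3.8}(1) supplies a finitely generated free $\mathcal{O}_{K''}[\Gamma]$-resolution of $V_{p_{1}}(\mathcal{O}_{K''})\otimes V'_{p_{2}}(\mathcal{O}_{K''})$ --- identifies the result with $\mathrm{Ext}^{\ast}_{K''[\Gamma]}(V_{p_{1}}(K'')\otimes_{K''}V'_{p_{2}}(K''),K'')=H^{\ast}(\Gamma,Y_{1}\otimes_{K''}Y_{2})$. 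It is the finite generation of the resolution that turns ``bounded'' homomorphisms into ``all'' homomorphisms after inverting $p$, and this replaces any factor-wise appeal to \cite{6}. Even if you retain your cokernel decomposition, you would still have to run this same integral computation for the mixed modules $X_{1}\otimes Y_{2}$ and $Y_{1}\otimes X_{2}$, so the detour does not avoid the real work.
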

 \begin{proof}
 	By Proposition \ref{prop3.8} and Lemma \ref{lem3.9} we have
 	\begin{align*}
 	&H^{\ast}(\Gamma,  
 	(\Omega_{log,b}^{p_{1}}(\mathfrak{X}_{K})\otimes_{K}K'')\otimes_{K''}(\Omega_{log,b}^{
 		p_{2}}(
 	\mathfrak{X}_{K'})\otimes_{K'}K''))\\
 	=&Ext_{K''[\Gamma]}^{\ast} \left( K'',
 	(Hom_{\mathcal{O}_{K}}(V_{p_{1}}(\mathcal{O}_{K}),\mathcal{O}_{K})
 	\otimes_{\mathcal{O}_{K}}K'') \otimes_{K''}
 	( Hom_{\mathcal{O}_{K'}}(V'_{p_{2}}(\mathcal{O}_{K'}),\mathcal{O}_{K'} 
 	)\otimes_{\mathcal{O}_{K'}}K'') \right) \\
 	=&Ext_{K''[\Gamma]}^{\ast} \left( K'',
 	(Hom_{\mathcal{O}_{K''}}(V_{p_{1}}(\mathcal{O}_{K''}),\mathcal{O}_{K''})
 	\otimes_{\mathcal{O}_{K''}}K'') \otimes_{K''}
 	( 
 	Hom_{\mathcal{O}_{K'}}(V'_{p_{2}}(\mathcal{O}_{K''}),\mathcal{O}_{K''} 
 	)\otimes_{\mathcal{O}_{K''}}K'') \right) \\
 	=&Ext_{K''[\Gamma]}^{\ast}(K'',
 	Hom_{\mathcal{O}_{K''}}(V_{p_{1}}(\mathcal{O}_{K''})\otimes 
 	V'_{p_{2}}(\mathcal{O}_{K''}),\mathcal{O}_{K''})\otimes_{\mathcal{O}_{K''}}K'')\\
 	=&Ext_{\mathcal{O}_{K''}[\Gamma]}^{\ast}(\mathcal{O}_{K''},
 	Hom_{\mathcal{O}_{K''}}(V_{p_{1}}(\mathcal{O}_{K''})\otimes_{\mathcal{O}_{K''}}
 	V'_{p_{2}}(\mathcal{O}_{K''}),\mathcal{O}_{K''}))\otimes_{\mathcal{O}_{K''}}K''\\
 	=&Ext_{\mathcal{O}_{K''}[\Gamma]}^{\ast}(V_{p_{1}}(\mathcal{O}_{K''})
 	\otimes_{\mathcal{O}_{K''}}
 	V'_{p_{2}}(\mathcal{O}_{K''}),\mathcal{O}_{K''})\otimes_{\mathcal{O}_{K''}}K''\\
 	=&Ext_{K''[\Gamma]}^{\ast}(V_{p_{1}}(K'')\otimes_{K''}
 	V'_{p_{2}}(K''),\mathcal{O}_{K''}\otimes_{\mathcal{O}_{K''}}K'')\\
 	=& H^{\ast}(\Gamma,  
 	H^{p_{1}}_{\dR}(\mathfrak{X}_{K}\otimes_{K}K'')\otimes_{K''} 
 	H^{p_{2}}_{\dR}(\mathfrak{X}_{K'}\otimes_{K'}K'') )
 	\end{align*}
 	since $ \mathcal{O}_{K''} $ is a free $ \mathcal{O}_{K} $-module (resp. $ 
 	\mathcal{O}_{K'} $-module) of finite rank.
 	This completes the proof.
 \end{proof}
 
 Now, we shall prove the main theorem (Theorem \ref{thm1.1}) of this paper.
 
 \textbf{Proof of the Theorem \ref{thm1.1}.}
 Since $ X:=\mathfrak{X}_{K}\times_{K''}\mathfrak{X}_{K'} $ is a 
 Stein-space, 
 its de Rham cohomology can be computed from the complex $ 
 \Omega^{\bullet}(X) $ of global holomorphic differential forms on $ X $. 
 Hence we have
 \[ 
 H^{\ast}(\Gamma,H_{\dR}^{s}(X))=H^{\ast}(\Gamma,h^{s}(\Omega^{\bullet}(X))),
 \]
 \[ 
 H^{\ast}(X_{\Gamma},\Omega^{r}_{X_{\Gamma}/K''})=H^{\ast}(\Gamma,\Omega^{r}(X)),
 \]
 \[ H_{\dR}^{\ast}(X_{\Gamma})=H^{\ast}(\Gamma,\Omega^{\bullet}(X)). \]
 Consider the hypercohomology of complex $ \Omega^{\bullet}(X) $ with 
 respect to the functor $ F:=(-)^{\Gamma} $ from the category of $ \Gamma 
 $-modules to the category of $ K'' $-vector spaces. As usual, first we 
 choose a 
 Cartan-Eilenberg resolution $ I^{\bullet,\bullet} $ of complex 
 $ \Omega^{\bullet}(X) $. This is a double complex $ (I^{r,s},d',d'') $ of 
 injective $ \Gamma $-modules and we denote the double complex $ 
 F(I^{r,s})=(I^{r,s})^{\Gamma} $ by $ (K^{r,s},d',d'') $. Now the homology 
 of total 
 complex $ 
 \text{Tot}(K^{r,s}) $ is just $ H_{\dR}^{n}(X_{\Gamma}) $. Then there are two 
 spectral sequences abutting to $ H_{\dR}^{n}(X_{\Gamma}) $.
 
 The first is 
 \[ 
 'E_{1}^{p,q}=R^{q}F(\Omega^{p}(X))=H^{q}(\Gamma,\Omega^{p}(X))=H^{q}(X_{\Gamma},
 \Omega^{p}(X)) \]
 and the associated filtration is Hodge filtration $ 
 F_{\dR}^{\bullet} $. The second is 
 \[ 
 ''E_{2}^{p,q}=R^{p}F(h^{q}(\Omega^{\bullet}(X)))=H^{p}(\Gamma,H_{\dR}^{q}(X))
 \]
 and the associated filtration is covering filtration $ F_{\Gamma}^{\bullet} 
 $.
 
 Similarly, consider the hypercohomology of complex 
 \[ 
 (\Omega_{log,b}^{\bullet}(\mathfrak{X}_{K})\otimes_{K}K'')\otimes_{K''}(\Omega_{log,b}^{
 	\bullet}(
 \mathfrak{X}_{K'})\otimes_{K'}K'')  \]
 with repsect to the functor $ F $ and 
 the 
 corresponding 
 double complex is denoted by $ 
 ( \tilde{K}^{r,s},\tilde{d}',\tilde{d}'') $. Since all differentials in $ 
 (\Omega_{log,b}^{\bullet}(\mathfrak{X}_{K})\otimes_{K}K'')\otimes_{K''}(\Omega_{log,b}^{
 	\bullet}(
 \mathfrak{X}_{K'})\otimes_{K'}K'') $ are zero, we see that $ d'=0 $.
 There are also two spectral sequence abutting 
 to $ H^{n}(\Gamma,  
 (\Omega_{log,b}^{\bullet}(\mathfrak{X}_{K})\otimes_{K}K'')\otimes_{K''}(\Omega_{log,b}^{
 	\bullet}(
 \mathfrak{X}_{K'})\otimes_{K'}K'')) $. The second is 
 \[ ''\tilde{E}_{2}^{p,q}=H^{p}\left( \Gamma, \oplus_{p_{1}+p_{2}=q} 
 (\Omega_{log,b}^{p_{1}}(\mathfrak{X}_{K})\otimes_{K}K'')\otimes_{K''}(\Omega_{log,b}^{
 	p_{2}}(
 \mathfrak{X}_{K'})\otimes_{K'}K'')\right)  \]

 The inclusion of complexes
 \[  
 (\Omega_{log,b}^{\bullet}(\mathfrak{X}_{K})\otimes_{K}K'')\otimes_{K''}(\Omega_{log,b}^{
 	\bullet}(
 \mathfrak{X}_{K'})\otimes_{K'}K'')\hookrightarrow 
 \Omega^{\bullet}(\mathfrak{X}_{K}\times_{K''}\mathfrak{X}_{K'})=
 \Omega^{\bullet}(\mathfrak{X}_{K}\otimes_{K}K'')\otimes_{K''}\Omega^{\bullet}(
 \mathfrak{X}_{K'}\otimes_{K'}K'')  \]
 induces a map of double complexes $ 
 \varphi:\tilde{K}^{r,s}\to K^{r,s} $. By Proposition \ref{prop2.2} and 
 Proposition \ref{prop4.1} we see that the maps 
 \[''E_{2}\varphi:''\tilde{E}_{2}^{p,q}=H^{p}\left( \Gamma, 
 \oplus_{p_{1}+p_{2}=q} 
 (\Omega_{log,b}^{p_{1}}(\mathfrak{X}_{K})\otimes_{K}K'')\otimes_{K''}(\Omega_{log,b}^{
 	p_{2}}(
 \mathfrak{X}_{K'})\otimes_{K'}K'')\right)\to
 H^{p}(\Gamma,H_{\dR}^{q}(X))=''E_{2}^{p,q}
 \]
 are isomorphisms.
 By Lemma 
 \ref{lem4.1} our covering spectral sequence for $ 
 H_{\dR}^{n}(X_{\Gamma}) $ degenerates at 
 $ E_{2} $ and we have
 \begin{align}
 H_{\dR}^{n}(X_{\Gamma})=F_{\Gamma}^{i}+ F_{\dR}^{n+1-i} \label{4}
 \end{align}
 for any $ 
 0\leq i\leq n+1 $. 
 
 Finally, by Corollary \ref{cor3.13} we get
 \begin{align}
 \text{dim}(F_{\Gamma}^{i})+\text{dim}(F_{\Gamma}^{n+1-i})=\text{dim}(H_{\dR}
 ^{n}(X_{\Gamma})). \label{5}
 \end{align}
 and by Serre duality (\cite{10}, Corollary 7.13) we get
 \begin{align}
 \text{dim}(F_{\dR}^{i})+\text{dim}(F_{\dR}^{n+1-i})=\text{dim}(H_{\dR}
 ^{n}(X_{\Gamma})). \label{6}
 \end{align}
 Now the assection follows from (\ref{4}), (\ref{5}), (\ref{6}) and Lemma 
 \ref{lem4.1}. $\qquad \qquad \qquad \qquad \qquad \qquad  \square $
 
  \section*{Acknowledgement}
   The content of this paper is a part of my master's thesis at East China 
   Normal University. I would like to thank my advisor Bingyong Xie who 
   introduced me to the subjects treated here and who offered useful 
   suggestions. Without his
   encouragement, this paper would never have been written.

 \medskip
 
 \bibliography{sample}
\end{document}